\newtheorem{theorem}{Theorem}[section]
\newtheorem{proposition}[theorem]{Proposition}
\newtheorem{lemma}[theorem]{Lemma}
\newtheorem{corollary}[theorem]{Corollary}
\newtheorem{conjecture}[theorem]{Conjecture}
\newtheorem*{question*}{Question}
\theoremstyle{definition}
\newtheorem{question}[theorem]{Question}
\theoremstyle{remark}
\newcommand{\abs}[1]{\left\lvert#1\right\rvert}
\newcommand{\floor}[1]{\left\lfloor #1 \right\rfloor}
\newcommand{\set}[1]{\left\{ #1 \right\}}
\DeclareMathOperator{\vcdim}{VC-dim}
\newcommand{\EE}{\mathbb{E}}
\newcommand{\PP}{\mathbb{P}}
\newcommand{\ZZ}{\mathbb{Z}}
\begin{document}

\begin{frontmatter}[classification=text]


\author[alon]{Noga Alon\thanks{Supported in part by ISF grant 281/17, GIF grant G-1347-304.6/2016 and the Simons Foundation}}
\author[fox]{Jacob Fox\thanks{Supported by a Packard Fellowship, by NSF Career Award DMS-1352121 and by an Alfred P. Sloan Fellowship}}
\author[zhao]{Yufei Zhao\thanks{Supported by NSF Award DMS-1362326 and DMS-1764176}}

\begin{abstract}
Let $G$ be an abelian group of bounded exponent and $A \subseteq G$. We show that if the collection of translates of $A$ has VC dimension at most $d$, then for every $\epsilon>0$ there is a subgroup $H$ of $G$ of index at most $\epsilon^{-d-o(1)}$ such that one can add or delete at most $\epsilon|G|$ elements to/from $A$ to make it a union of $H$-cosets. 

We also establish a removal lemma with polynomial bounds, with applications to property testing, for induced bipartite patterns in a finite abelian group with bounded exponent. \end{abstract}
\end{frontmatter}


\section{Introduction} \label{sec:intro}

Szemer\'edi's regularity lemma \cite{Sz76} gives a rough structural
decomposition for all graphs and is one of the most powerful tools in
graph theory. A major drawback of the regularity lemma is that the number
of parts in the decomposition grows as an exponential tower of $2$'s
of height a power of $1/\epsilon$, where $\epsilon$ is the regularity
parameter~\cite{Gow97}. A natural question that has been studied by many researchers
is: in what circumstances can one get a more effective bound? Namely,
under what conditions does every graph in a family $\mathcal{F}$ of graphs
necessarily have a partition with much fewer parts, say polynomial in
$1/\epsilon$? One natural condition for a family $\mathcal{F}$ of graphs
is that it is \emph{hereditary}, that is,  if $G \in \mathcal{F}$ then
every induced subgraph of $G$ is also in $\mathcal{F}$. For hereditary
families, it turns out that the bound on the number of parts in a regular
partition is polynomial in $1/\epsilon$ if the neighborhood set system
of every graph in the family has bounded VC dimension, and otherwise
the bound is tower-type. This gives a satisfactory answer to the problem.

A \emph{set system} $\mathcal{S}$ is a collection of subsets of some
ground set $\Omega$. Here we only consider finite $\Omega$. We say
that $U \subseteq \Omega$ is \emph{shattered} by $\mathcal{S}$ if for
every $U' \subseteq U$ there is some $T \in \mathcal{S}$ with $T \cap U =
U'$. The \emph{Vapnik--Chervonenkis dimension} (or \emph{VC dimension})
of $\mathcal{S}$, denoted $\vcdim \mathcal{S}$, is the size of the
largest shattered set.

Let $G$ be a graph. The \emph{neighborhood} $N(v)$ of a vertex $v \in
V(G)$ is the set of vertices adjacent to $v$. The \emph{VC dimension}
of a graph $G$ is defined to be $\vcdim\{N(v):v \in V\}$.

Given a bipartite graph $F$ with vertex bipartition $V(F) = U \cup V$,
we say that a map $\phi \colon V(F) \to V(G)$ \emph{bi-induces} $F$
if for every $(u,v) \in U \times V$, the pair $uv$ is an edge of $F$
if and only if $\phi(u)\phi(v)$ is an edge of $G$. Note that we have no
requirements about edges in $G$ between vertices in the image of $U$,
and likewise with $V$. We say that $G$ contains a \emph{bi-induced copy
of $H$} if there exists a map $\phi$ as above that is injective on each
of $U$ and $V$.\footnote{Having a bi-induced copy of $F$ is weaker than having
an \emph{induced} copy of $F$, where in the latter we also require
that there are no edges in $G$ between vertices in the image of $U$,
and likewise with $V$. Also, an alternative notion of bi-induced copy of $H$ assumes that $\phi$ is injective. The discussed results hold for this 
alternative notion as well.}

It is known that the following are equivalent for a hereditary family
$\mathcal{F}$ of graphs:

\begin{enumerate} 
\item[(1)] The VC dimension of the graphs in $\mathcal{F}$ is uniformly bounded.
\item[(2)] There is a bipartite graph $F$ such that none of the graphs in $\mathcal{F}$ has a bi-induced copy of $F$.
\item[(3)] The family $\mathcal{F}$ has a forbidden induced bipartite graph,
a forbidden induced complement of a bipartite graph, and a
forbidden induced split graph.  
\item[(4)] The number of graphs in
$\mathcal{F}$ on $n$ vertices is at most $2^{n^{2-\epsilon}}$ for some
$\epsilon=\epsilon(\mathcal{F})>0$. In contrast, every other hereditary
family of graphs contains at least $2^{n^2/4}$ labeled graphs on $n$
vertices.
\item[(5)] There is a constant $k = k(\mathcal{F})$ such that every $n$-vertex graph in $\mathcal{F}$ has an equitable vertex partition into at
most $\epsilon^{-k}$ parts such that all but at most an $\epsilon$-fraction
of the pairs of parts have edge density at most $\epsilon$ or at least
$1-\epsilon$. In contrast, every other hereditary family of graphs has
a graph that requires a tower in a power of $1/\epsilon$ parts in any
$\epsilon$-regular equitable vertex partition.  
\end{enumerate}

The above characterizations give an interesting dichotomy between
hereditary families of graphs of bounded VC dimension versus those of unbounded
VC dimension. It shows that families of graphs with bounded VC dimension
have smaller growth and are more structured. The
equivalence of (1) and (4) was given by Alon, Balogh, Bollob\'as, and  Morris~\cite{ABBM}. 
Alon, Fischer, and
Newman~\cite{AFN} proved a bipartite version of the regularity lemma
for graphs of bounded VC dimension, and the version for all graphs is due
to  Lov\'asz and Szegedy \cite{LS}. The proof was simplified with
improved bounds by Fox, Pach, and Suk \cite{FPS}. 
Further results related to the
above equivalences for tournaments can be found in \cite{FGSY}.

A \emph{half-graph} is a bipartite graph on $2k$ vertices
$\{u_1,\ldots,u_k\} \cup \{v_1,\ldots,v_k\}$ such that $u_i$ is adjacent
to $v_j$ if and only if $i \leq j$. Malliaris and Shelah \cite{MaSh}
proved if a graph has no bi-induced copy of the half-graph on $2k$
vertices, then one can partition the vertex set into $\epsilon^{-O_k(1)}$
many parts such that every pair of parts is $\epsilon$-regular (there
are no irregular pairs). Bi-inducing a half-graph is related to a notion
of stability in model theory, and for this reason Malliaris and Shelah
called their result a ``stable regularity lemma''.

The above discussion summarizes some relevant results for graphs. We
now turn our attention to subsets of groups and their associated Cayley
graphs. Let $G$ be a finite abelian group, written additively. Let $A
\subseteq G$. Consider the \emph{Cayley sum graph} formed by taking the
elements of $G$ as vertices, where $x,y \in G$ are adjacent if $x+y
\in A$ (we may end up with some loops; alternatively, we can consider a
bipartite version of this construction). The VC dimension of the graph
corresponds to the VC dimension of the collection of translates of $A$,
which we simply call the \emph{VC dimension of $A$}, defined as
\[
\vcdim A := \vcdim\{ A + x : x \in G\}.
\]

For a bipartite graph $F$ with vertex bipartition $U \cup V$, we say that
a map $\phi\colon V(F) \to G$ \emph{bi-induces $F$ in $A$} if, for every
$(u,v) \in U \times V$, $uv$ is an edge of $F$ if and only if $\phi(u)
+ \phi(v) \in A$. We say that $A$ has a \emph{bi-induced copy of $F$}
if there exists a map $\phi$ as above that is injective on each of $U$
and $V$.

Observe that $A$ has a bi-induced copy of $F$ if its VC dimension is
large enough. To see this, first note that if no pair of vertices in $U$
have identical neighborhoods in $V$, and $A$ has VC dimension at least
$\abs{V}$, then $A$ has a bi-induced copy of $F$. Indeed, we can construct
$\phi$ by mapping $V$ to a subset of $G$ shattered by translates of $A$
(such a choice exists since $\vcdim A \ge \abs{V}$). Since $\phi(V)$ is
shattered, for every $u \in U$, there is some $y_u \in G$ such that $(A
- y) \cap \phi(V) = \phi(N(u))$. Let $\phi$ send $u$ to this $y_u$, for
each $u \in U$. We obtain a map $\phi \colon V(F) \to G$ that bi-induces
$F$, though this map may not be injective on $U$ (it is always injective
on $V$) if some pairs of vertices of $U$ have identical neighborhoods,
but this can be easily fixed\footnote{\label{ft:vc-bi-induce}Consider
the bipartite graph $F_+$ obtained from $F$ by adding $\lceil \log_2
\abs{U} \rceil$ new vertices to the vertex set $V$, and add edges from
the new vertices to $U$ so that no two vertices in $U$ have identical
neighborhoods in $F_+$. By earlier arguments, if $\vcdim A \ge \abs{V} +
\lceil \log_2 \abs{V}\rceil$, then $A$ necessarily contains an bi-induced
copy of $F_+$, and hence a bi-induced copy of $F$.}.

Green \cite{Green05} proved an arithmetic analogue of Szemer\'edi's
regularity lemma for abelian groups. The statement is much simpler in the
case of abelian groups of bounded exponents, which is the main focus of
our paper (some remarks regarding general groups are given in the final
section). For an abelian group $G$ and a subset $A \subseteq G$, a coset
$H+x$ of a subgroup $H$ is called \emph{$\epsilon$-regular} if all the
nontrivial Fourier coefficients of $A \cap (H+x)$, when interpreted
as a subset of $H+x$, are at most $\epsilon$. For each $\epsilon>0$
and positive integer $r$, Green's arithmetic regularity lemma states
that there is $K=K(r,\epsilon)$ such that the following holds. If $G$
has exponent at most $r$ and $A \subseteq G$, then there is a subgroup $H
\subseteq G$ of index at most $K$ such that all but an $\epsilon$-fraction
of the cosets of $H$ are $\epsilon$-regular.

Recently, an arithmetic analog of the Malliaris--Shelah stable regularity
lemma was proved by Terry and Wolf \cite{TeWo} for $G=\mathbb{F}_p^n$
with $p$ fixed. It was shown that if $A \subseteq G$ has no bi-induced copy
of a half-graph on $2k$ vertices, then there is a subgroup $H$ of $G$ of
index at most $e^{\epsilon^{-O_{k,p}(1)}}$ such that for every $x \in G$,
one has either $\abs{A\cap (H+x)} \le \epsilon \abs{H}$ or $\abs{A\cap
(H+x)} \ge (1-\epsilon) \abs{H}$. Here the subscripts on the $O_{k,p}(1)$
mean that the constant is allowed to depend on $k$ and $p$. The result
was subsequently extended to general groups by Conant, Pillay, and Terry
\cite{CPT}, who showed that for every finite group $G$, if $A \subseteq G$
has no bi-induced copy of the half-graph on $2k$ vertices, then there
is a normal subgroup $H$ of $G$ of index $O_{k,\epsilon}(1)$ such that
there is some union $S$ of $H$-cosets such that $\abs{A \Delta S} \le
\epsilon \abs{H}$, where $A \Delta B = (A \setminus B) \cup (B \setminus
A)$ denotes the symmetric difference. However, the general group version
of the theorem~\cite{CPT} gives no quantitative bounds on the index of
$H$ due to the model theoretic tools involved in its proof.

We saw earlier that forbidding a fixed bi-induced bipartite graph
implies bounded VC dimension. Our first main result generalizes a
variant of Terry and Wolf's result to sets of bounded VC dimension,
and gives bounds of polynomial order in $1/\epsilon$. Its proof can be
found in Section~\ref{sec:reg}.

\begin{theorem}[Regularity lemma] \label{thm:reg}
Fix positive integers $r$ and $d$. If $G$ is a finite abelian group with
exponent at most $r$, and $A \subseteq G$ has VC dimension at most $d$,
then for every $\epsilon > 0$ there is a subgroup $H$ of $G$ of index at
most $\epsilon^{-d -o(1)}$ such that $\abs{A\Delta S} \le \epsilon\abs{G}$
for some $S \subseteq G$ which is a union of cosets of $H$.

Here $o(1)$ is some quantity that goes to zero as $\epsilon \to 0$,
at a rate possibly depending on $r$ and $d$.
\end{theorem}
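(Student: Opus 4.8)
The plan is to reduce the theorem to a Fourier‑analytic concentration statement about $A$, prove that statement by a density/energy‑increment argument run through a chain of subgroups, and control the length of that chain using the VC hypothesis via Haussler's packing lemma.

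\textbf{Reduction.} Write $\widehat{1_A}$ for the Fourier transform of the indicator $1_A$. For a subgroup $H\le G$ let $\mu_H$ be the uniform probability measure on $H$ and set $f_H:=1_A*\mu_H$, so that $f_H(x)$ is the density of $A$ in the coset $H+x$; in particular $f_H$ is constant on cosets of $H$ and $\widehat{f_H}$ is supported on $H^\perp=\setcond{\chi}{\chi|_H\equiv 1}$. It suffices to produce a subgroup $H$ with $[G:H]\le\epsilon^{-d-o(1)}$ and $\sum_{\chi\notin H^\perp}\abs{\widehat{1_A}(\chi)}^2\le\epsilon/2$. Indeed, take $S:=\setcond{x\in G}{f_H(x)>1/2}$, a union of cosets of $H$ since $f_H$ is $H$‑invariant. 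Then $\abs{A\Delta S}/\abs{G}=\EE_x\min\paren{f_H(x),1-f_H(x)}$, and using the elementary inequality $\min(t,1-t)\le 2t(1-t)$ for $t\in[0,1]$ together with $\EE f_H=\alpha:=\abs{A}/\abs{G}$ and the orthogonality of $1_A-f_H$ to $f_H$,
\[
\frac{\abs{A\Delta S}}{\abs{G}}\;\le\;2\,\EE_x\, f_H(x)\paren{1-f_H(x)}\;=\;2\paren{\norm{1_A}_2^2-\norm{f_H}_2^2}\;=\;2\norm{1_A-f_H}_2^2\;=\;2\sum_{\chi\notin H^\perp}\abs{\widehat{1_A}(\chi)}^2\;\le\;\epsilon .
\]
This step uses neither the VC hypothesis nor the bounded exponent; both are needed only to construct $H$, and both are genuinely necessary (an arithmetic progression in $\ZZ/N$ has bounded VC dimension but is not close to a union of cosets of any bounded‑index subgroup, and without the VC hypothesis the best possible bound on $[G:H]$ is tower‑type, by Green's theorem).

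\textbf{Constructing $H$.} I would build a decreasing chain $G=H_0\supsetneq H_1\supsetneq\cdots$ with $H_{i+1}=H_i\cap\ker\chi_i$ for a character $\chi_i\notin H_i^\perp$; since the order of $\chi_i$ divides $\exp(G)\le r$ we have $[G:H_{i+1}]\le r\,[G:H_i]$. If at some stage $\sum_{\chi\notin H_i^\perp}\abs{\widehat{1_A}(\chi)}^2\le\epsilon/2$ we halt and invoke the reduction. Otherwise the energy $q(H_i):=\norm{f_{H_i}}_2^2=\sum_{\chi\in H_i^\perp}\abs{\widehat{1_A}(\chi)}^2$, which is nondecreasing along the chain and lies in $[\alpha^2,\alpha]$, is bounded away from $\alpha$. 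The heart of the argument is to show that, when $\vcdim A\le d$, one can always choose $\chi_i$ so that $q$ increases by a definite amount, and that the chain terminates after at most $(d+o(1))\log_r(1/\epsilon)$ steps, so that $[G:H]\le r^{(d+o(1))\log_r(1/\epsilon)}=\epsilon^{-d-o(1)}$. The tool for the termination count is Haussler's packing lemma applied to the set system of translates of $A$: since this system has VC dimension at most $d$, any family of translates that is pairwise $\delta$‑separated in the normalized symmetric‑difference metric has size at most $\delta^{-d-o(1)}$, so the metric space of translates has covering number $\le\delta^{-d-o(1)}$ at every scale $\delta$. Each refinement step reflects the presence of an obstruction — a translate (or coset‑restriction) of $A$ that $H_i$ fails to resolve, necessarily $\epsilon^{\Theta(1)}$‑far in this metric from those already resolved — so the number of steps is governed by these covering numbers at scale $\epsilon^{\Theta(1)}$; matching each metric scale against the factor‑$r$ growth of the index brings the step count down to $(d+o(1))\log_r(1/\epsilon)$. (An alternative route is to apply the known efficient regularity lemma for bounded‑VC graphs to the Cayley sum graph of $A$ and then upgrade the resulting polynomial‑sized partition to a partition into cosets, but that upgrade runs into the same obstruction.)

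\textbf{The main obstacle.} The difficulty is entirely in the accounting just sketched: one must show that the Fourier mass of a bounded‑VC‑dimension set in a bounded‑exponent group is never too spread out over the characters outside $H_i^\perp$ — so that a single bounded‑index refinement captures a definite share of it — and that only about $d\log_r(1/\epsilon)$ such refinements can occur. This is exactly where the VC hypothesis is used, through Haussler‑type packing, and it is also where the bounded‑exponent hypothesis re‑enters, since it is what lets the refinement proceed through honest subgroups with the index multiplied by at most $r$ at each step. Finally, the several $o(1)$'s — and the dependence of their rates on $r$ and $d$ — come from absorbing the $d$‑dependent constants in Haussler's lemma and from rounding $\log_r(1/\epsilon)$ to an integer number of refinement steps.
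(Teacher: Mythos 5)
Your reduction step is fine: rounding $f_H=1_A*\mu_H$ to the set $S$ of cosets of density $>1/2$ and using $\min(t,1-t)\le 2t(1-t)$ is essentially the same computation the paper performs at the end of its key lemma, and the $L^2$ target $\sum_{\chi\notin H^\perp}\abs{\widehat{1_A}(\chi)}^2\le\epsilon/2$ is equivalent, up to constants absorbed in the $o(1)$, to the combinatorial conclusion. The gap is in the construction of $H$, which is precisely the part you label ``the heart of the argument'' and then only assert. Two claims are unsupported: (i) that whenever the mass outside $H_i^\perp$ exceeds $\epsilon/2$ one can pick a \emph{single} character whose addition increases the energy by a ``definite amount'' --- in general that mass can be spread over up to $\abs{G}$ characters, so the largest available increment can be arbitrarily small, and nothing in your sketch shows the VC hypothesis concentrates it; and (ii) that Haussler's packing lemma caps the number of refinement steps at $(d+o(1))\log_r(1/\epsilon)$. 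Haussler gives packing/covering numbers of order $\delta^{-O(d)}$ at scale $\delta$, i.e.\ \emph{polynomial} in $1/\epsilon$ at scale $\epsilon^{\Theta(1)}$, not logarithmic; with the index multiplying by $r$ at each step, a polynomial number of steps yields an index exponential in a power of $1/\epsilon$, nowhere near $\epsilon^{-d-o(1)}$. The phrase ``matching each metric scale against the factor-$r$ growth of the index'' is not an argument: you never show that successive obstructions live at geometrically decreasing scales, nor that an obstruction must be $\epsilon^{\Theta(1)}$-far from previously resolved translates, so the step count is not controlled. To budget only $d\log_r(1/\epsilon)$ steps you would need each step to capture energy about $1/(d\log_r(1/\epsilon))$, and no mechanism for that is provided; indeed without additional additive-structural input this is exactly the regime where Green's regularity argument is forced into tower-type behaviour.

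For comparison, the paper gets the subgroup by a different and genuinely additive-combinatorial route: Haussler's lemma is used only to show that the approximate stabilizer $B=\{x:\abs{A\Delta(A+x)}\le\delta\abs{G}\}$ has density at least $(\delta/30)^d$ (Lemma~\ref{lem:invariant}); then one finds $\ell=\delta^{-o(1)}$ with $\abs{2\ell B}\le K\abs{\ell B}$ by a pigeonhole on iterated sumsets, and applies the Bogolyubov--Ruzsa lemma for bounded-exponent groups (or alternatively Ruzsa's Freiman-type theorem plus Kneser's theorem) to place a genuine subgroup $H$ of size $\delta^{o(1)}\abs{B}\ge\delta^{d+o(1)}\abs{G}$ inside $2\ell B-2\ell B$; the triangle inequality gives $\abs{A\Delta(A+x)}\le\delta^{1-o(1)}\abs{G}$ for all $x\in H$, and then the same rounding you wrote finishes the proof. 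If you want to stay in Fourier language, the honest version of your plan would note that for $\chi$ carrying significant mass one has $\EE_{x\in B}\abs{1-\chi(x)}^2$ small, so the relevant spectrum lies in a dual Bohr-type set of $B$; converting that into the annihilator of a bounded-index subgroup is again a Bogolyubov--Ruzsa/Freiman statement in bounded exponent, i.e.\ exactly the input your sketch omits. As written, the proposal does not constitute a proof.
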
 

We also prove a removal lemma for bi-induced copies of a fixed bipartite
graph. Let us first recall the classical graph removal lemma. We say
that an $n$-vertex graph is \emph{$\epsilon$-far} from some property if
one needs to add or delete more than $\epsilon n^2$ edges to satisfy the
property. The triangle removal lemma\footnote{The removal lemma is often
stated in the contrapositive, which better explains the name ``removal
lemma'': if triangle density of a graph is at most $\delta(\epsilon)>0$,
then the graph can be made triangle-free by deleting $\epsilon n^2$ edges}
says that if an $n$-vertex graph is $\epsilon$-far from triangle-free,
then its triangle density is at least $\delta(\epsilon)>0$. The original
graph regularity proof \cite{RS} of the triangle removal lemma shows that we may take
$1/\delta(\epsilon)$ to be a tower of 2's of height $\epsilon^{-O(1)}$, which
was  improved to height 
$O(\log (1/\epsilon))$ in \cite{Fox11}. It is known that
there exists a constant $c>0$ such that the bound in the triangle removal
lemma cannot be improved to $\delta = \epsilon^{-c \log (1/\epsilon)}$
(see \cite{CF13} for a survey on graph removal lemmas). There is also a
removal lemma for induced subgraphs \cite{AFKS}, initially proved using
a so-called \emph{strong regularity lemma}, though better bounds were
later obtained in \cite{CF12}.

An arithmetic analog of the graph removal lemma was first proved by Green~\cite{Green05} for ``complexity 1'' patterns such as $x+y+z=0$ using his arithmetic regularity lemma. Kr\'al', Serra, and Vena~\cite{KSV09} later showed that Green's arithmetic removal lemma can be deduced as a consequence of the graph removal lemma. More general arithmetic removal lemmas for linear systems were later proved as a consequence of the hypergraph removal lemma~\cite{KSV12,Sha10}. We refer to the references for precise statements. Note that the reduction from the arithmetic removal lemma to the (hyper)graph removal lemma fails for induced patterns. It remains open to find a general induced arithmetic removal lemmas \cite[Conjecture 5.3]{Sha10}.

Our second main result gives an arithmetic analog of the removal lemma,
with polynomial bounds, for bi-induced patterns.  We say that $A\subset
G$ is \emph{$\epsilon$-far from bi-induced-$F$-free} if $A'\subset
G$ contains a bi-induced copy of $F$ whenever $\abs{A \Delta A'} \le
\epsilon \abs{G}$. Here is our second main result, whose proof can be
found in Section~\ref{sec:removal}.

\begin{theorem}[Removal lemma] 
\label{thm:removal}
Fix a positive integer $r$ and a bipartite graph $F$. Let $G$ be a finite
abelian group with exponent at most $r$. For every $0 < \epsilon <1/2$,
if $A\subseteq G$ is $\epsilon$-far from bi-induced-$F$-free, then the
probability that a uniform random map $\phi \colon V(F) \to G$ bi-induces
$F$ is at least $\epsilon^{O(\abs{V(F)}^3)}$.
\end{theorem}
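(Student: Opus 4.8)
The plan is to deduce the arithmetic removal lemma from the arithmetic regularity lemma (Theorem~\ref{thm:reg}), following the classical regularity-plus-counting scheme but taking care that the $\epsilon^{-d-o(1)}$ bound on the index propagates to a polynomial dependence in the final probability. First I would record that if $A$ is $\epsilon$-far from bi-induced-$F$-free then in particular $A$ is not $\epsilon$-close to any set on which $F$ fails to embed; the contrapositive we want to prove is: if the random-embedding probability is small, then $A$ can be made bi-induced-$F$-free by modifying few elements. So assume the probability that a uniform random $\phi\colon V(F)\to G$ bi-induces $F$ is less than some $\delta$ (to be chosen as a suitable power of $\epsilon$), and produce $A'$ with $\abs{A\Delta A'}\le\epsilon\abs{G}$ and no bi-induced copy of $F$.

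The key steps, in order. (1) \emph{Bounded VC dimension.} As explained in the excerpt (footnote~\ref{ft:vc-bi-induce}), if $\vcdim A \ge \abs{V(F)} + \lceil\log_2\abs{V(F)}\rceil$ then $A$ already contains a bi-induced copy of $F$, and in fact many of them by a supersaturation/sampling argument — so the random probability is bounded below by a constant depending only on $F$, and there is nothing to prove. Hence we may assume $d:=\vcdim A < \abs{V(F)} + \lceil\log_2\abs{V(F)}\rceil$, a bound depending only on $F$. (2) \emph{Apply Theorem~\ref{thm:reg}} with parameter $\epsilon_0$ (polynomial in $\epsilon$, chosen later): we get a subgroup $H\le G$ of index $k\le\epsilon_0^{-d-o(1)}$ and a set $S$, a union of $H$-cosets, with $\abs{A\Delta S}\le\epsilon_0\abs{G}$. (3) \emph{Cleaning.} Modify $A$ on the cosets $H+x$ where $A$ disagrees significantly with $S$; more precisely, delete or add elements so that on each coset $H+x$ the set $A'\cap(H+x)$ is either all of $H+x$ or empty, matching $S$. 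The total number of changes is $O(\epsilon_0\abs{G})$, so choosing $\epsilon_0=\epsilon/C$ for a suitable constant keeps $\abs{A\Delta A'}\le\epsilon\abs{G}$. Thus $A'$ is itself a union of $H$-cosets. (4) \emph{Descent to $G/H$.} Since $A'$ is a union of $H$-cosets, whether a map $\phi\colon V(F)\to G$ bi-induces $F$ in $A'$ depends only on the images $\bar\phi\colon V(F)\to G/H$, because $\phi(u)+\phi(v)\in A'$ iff $\bar\phi(u)+\bar\phi(v)\in \bar A'$, where $\bar A'=A'/H\subseteq G/H$. (5) \emph{Removal in the quotient.} If $A'$ has a bi-induced copy of $F$, then $\bar A'$ has one in $G/H$, a group of size $k=O(\epsilon_0^{-d-o(1)})=\epsilon^{-O_F(1+o(1))}$; pulling back, each bi-inducing $\bar\phi$ lifts to $(\abs{H})^{\abs{V(F)}}$ many bi-inducing $\phi$'s, so the random probability over $G$ is at least $k^{-\abs{V(F)}} = \epsilon^{O(\abs{V(F)}^2)}$ (using $d=O(\abs{V(F)})$ and absorbing the $o(1)$ and the $O(\log\abs{V(F)})$ into the exponent). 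Hence if that probability is below this threshold, no such $\bar\phi$ exists, $A'$ is bi-induced-$F$-free, and we are done.

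There are two points needing care, and the second is the main obstacle. The minor point is that in step~(5) one bounds the probability from below by the probability of hitting one fixed bi-inducing tuple $\bar\phi$; this loses only polynomially in $k$, which is fine. The main obstacle is reconciling the $o(1)$ in the exponent of Theorem~\ref{thm:reg} with the clean polynomial bound $\epsilon^{O(\abs{V(F)}^3)}$ claimed in the theorem: the index is $\epsilon_0^{-d-o(1)}$, and raising this to the power $\abs{V(F)}$ gives $\epsilon_0^{-\abs{V(F)}(d+o(1))}$, which is $\epsilon^{-O(\abs{V(F)}^2)}$ pointwise but the $o(1)$ term contributes an extra factor that must be dominated — this is exactly why the stated exponent is $\abs{V(F)}^3$ rather than $\abs{V(F)}^2$, i.e.\ we spend the extra factor of $\abs{V(F)}$ to absorb the subpolynomial loss (valid once $\epsilon$ is small enough in terms of $r$ and $F$; for larger $\epsilon$ the statement is vacuous or handled by adjusting constants). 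I would handle this by fixing $\epsilon_0 = \epsilon^{c}$ for a small constant $c=c(F)$ and checking that, since $o(1)\to 0$, the product $(d+o(1))\abs{V(F)}\cdot c^{-1}$ is at most $\abs{V(F)}^3$ for all sufficiently small $\epsilon$, which suffices because the theorem's $O(\cdot)$ absorbs the threshold on $\epsilon$. The remaining steps (supersaturation in step~(1), and the routine counting in steps~(3)–(5)) are standard and I would only sketch them.
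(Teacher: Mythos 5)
There are two genuine gaps, and together they show why your route through Theorem~\ref{thm:reg} alone cannot work; the paper itself remarks that this regularity lemma ``seems not quite strong enough'' for the removal lemma. The first gap is step~(1): you claim that if $\vcdim A \ge \abs{V(F)}+\lceil\log_2\abs{V(F)}\rceil$ then the probability that a uniform random map bi-induces $F$ is bounded below by a constant depending only on $F$, via an unspecified ``supersaturation/sampling argument.'' But a large VC dimension only guarantees \emph{one} shattered set of bounded size, hence one bi-induced copy (footnote~\ref{ft:vc-bi-induce}); the probability of a random $\phi\colon V(F)\to G$ hitting it can be as small as $\abs{G}^{-\abs{V(F)}}$, which is not $\epsilon^{O_F(1)}$ for large $G$. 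There is no automatic supersaturation for bi-induced copies, and this missing step is precisely the crux: the paper replaces the plain hypothesis $\vcdim A\le d$ by a \emph{robust} one (Proposition~\ref{prop:robust-reg}), in which option~(a) asserts that a random bounded-size sample has VC dimension exceeding $d$ with probability $0.9$; only then does each such sample supply a copy, making the count of copies polynomially large. Deriving this dichotomy requires extra work (Lemmas~\ref{lem:ind-set}--\ref{lem:robust-vc-to-ball}), which your sketch omits entirely.

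The second gap is in steps~(3)--(5): after rounding $A$ to a union $A'$ of $H$-cosets, you produce a bi-induced copy of $F$ in $A'$ and lift it to $\abs{H}^{\abs{V(F)}}$ maps, but these are copies in $A'$, while the theorem demands copies in $A$. The lifted maps have $\phi(u)+\phi(v)$ ranging over entire $H$-cosets, and on cosets where $A$ has density near $1/2$ --- exactly the cosets where your cleaning changed many elements --- every lifted map may fail to bi-induce $F$ in $A$. To transfer the copy back to $A$ one must force the copy in $A'$ to live on cosets where $A$ is within $\eta=1/(2\abs{U}\abs{V})$ of empty or full, and a single application of regularity at scale $\epsilon$ cannot guarantee this. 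The paper's proof therefore applies Proposition~\ref{prop:robust-reg} twice, at scales $\epsilon_1=\epsilon/10$ and $\epsilon_2\approx\epsilon^{d}$, intersects the two subgroups, and uses a random shift $z$ to choose coset representatives in $z+K$ avoiding the few bad $H$-cosets --- a strong-regularity-style iteration analogous to the induced graph removal lemma. Incidentally, this double application (index of $H_1\cap H_2$ at most $\epsilon^{-d^2-d-o(1)}$ with $d=O(\abs{V(F)})$), not the absorption of $o(1)$ terms as you suggest, is the actual source of the $\abs{V(F)}^3$ in the exponent; a single application as in your plan would have given $\abs{V(F)}^2$, but it does not prove the theorem.
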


We mention an application to property testing. The removal lemma gives
a polynomial-time randomized sampling algorithm for distinguishing
sets $A \subseteq G$ that are bi-induced-$F$-free from those that are
$\epsilon$-far from bi-induced-$F$-free. Indeed, sample a random map
$\phi \colon V(F) \to G$, and output YES if $\phi$ bi-induces $F$ and
is injective on each vertex part of $F$, and otherwise output NO. If
$A$ is bi-induced-$F$-free, then the algorithm always outputs NO. On
the other hand, if $F$ is $\epsilon$-far from bi-induced-$F$-free,
then by the theorem above, the algorithm outputs YES with probability
at least $\epsilon^{O_F(1)}$, provided that $G$ is large enough,
so that $\phi$ is injective with high probability. We can then repeat
the experiment $\epsilon^{-O_F(1)}$ times to obtain a randomized
algorithm that succeeds with high probability.

\section{Regularity lemma} \label{sec:reg}

In this section, we prove Theorem~\ref{thm:reg}.

We say that a set system $\mathcal{S}$ on a finite ground set $\Omega$ is
\emph{$\delta$-separated} if $\abs{S \Delta T} \ge \delta \abs{\Omega}$
for all distinct $S,T \in \mathcal{S}$. We quote a bound on the size of
a $\delta$-separated system.

\begin{lemma}[Haussler's packing lemma~\cite{H95}] 
\label{lem:haussler}
Let $d,\delta >0$. If $\mathcal{S}$ is a $\delta$-separated set system
of VC dimension at most $d$, then $\abs{\mathcal{S}} \le (30/\delta)^d$.
\end{lemma}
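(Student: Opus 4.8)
The plan is to prove Haussler's packing lemma, Lemma~\ref{lem:haussler}, by the standard probabilistic "sampling and shifting" argument. Let $\mathcal{S}$ be a $\delta$-separated set system on a ground set $\Omega$ of size $n$, with $\vcdim \mathcal S \le d$. I want to show $|\mathcal S| \le (30/\delta)^d$.

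(1) Reduce to a dual-distance/packing problem. Consider a uniformly random sub-multiset $M \subseteq \Omega$ of size $m$ (to be chosen, on the order of $m \asymp d/\delta$), sampled with replacement. For each $S \in \mathcal S$ look at its trace $S \cap M$. On average any two distinct $S, T \in \mathcal S$ disagree on $M$ in roughly $m \cdot |S \triangle T|/n \ge m\delta$ coordinates, so in expectation the traces $S\cap M$ stay far apart; in particular, with positive probability all traces are distinct, so $|\mathcal S|$ is at most the number of distinct traces on $M$. The trick is to bound that number using the bounded VC dimension: by the Sauer–Shelah lemma the number of distinct traces on an $m$-element set is at most $\sum_{i\le d}\binom{m}{i} \le (em/d)^d$.

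(2) The heart of the argument — improving $(em/d)^d$ to roughly $(Cm\delta)^d$ via the shifting/compression bound on the "unit-distance graph" of traces. Form the graph $\Gamma$ on the vertex set of distinct traces $\{S\cap M : S\in\mathcal S\}$, joining two traces if they differ in exactly one element of $M$. A counting argument (each vertex, restricted appropriately, together with Sauer–Shelah applied to the "link" set systems) shows $\Gamma$ has average degree at most $2d$, hence at most $d\,|V(\Gamma)|$ edges. On the other hand, $\delta$-separation forces the traces to be pairwise far in Hamming distance on $M$ (again in expectation, which we can make to hold by a suitable choice of $M$, or handle via the expected number of close pairs), so a short-path/expansion argument in $\Gamma$ — walking from one trace toward another one coordinate at a time — shows $\Gamma$ must in fact have many edges, at least about $(\delta m/ \text{const})\,|V(\Gamma)|$ of them. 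Comparing the two estimates gives $\delta m \le \text{const}\cdot d$, and then tracking the constants through the Sauer–Shelah bound with $m \approx Cd/\delta$ yields $|\mathcal S| \le |V(\Gamma)| \le (30/\delta)^d$.

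I expect the main obstacle to be step~(2): getting the degree bound on the unit-distance graph $\Gamma$ with the right constant, and in particular organizing the expectation over the random sample $M$ so that "traces are distinct" and "traces are pairwise Hamming-far" hold simultaneously (one cannot quite demand the latter deterministically, so one argues that the expected number of too-close pairs is small and removes them, or runs the degree/edge count in expectation). The arithmetic needed to land exactly on the constant $30$ rather than some larger absolute constant is delicate but routine once the structure is set up; I would choose $m = \lceil c d/\delta\rceil$ with $c$ optimized at the end. Everything else — Sauer–Shelah, the with-replacement sampling, linearity of expectation — is standard, and since the lemma is quoted from~\cite{H95} one may alternatively simply cite it, but the sketch above is the route I would take to reprove it self-containedly.
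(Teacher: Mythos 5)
The paper itself offers no proof of this lemma: it is imported as a black box from Haussler~\cite{H95}, so your closing remark that one may simply cite it is exactly what the paper does. The problem is with the self-contained sketch you propose instead: its central step~(2) does not work as described. You want a \emph{lower} bound on the number of edges of the unit-distance graph $\Gamma$ on the traces $\{S\cap M : S\in\mathcal{S}\}$, and you propose to extract it from the fact that $\delta$-separation keeps the traces pairwise \emph{far apart} in Hamming distance. That is backwards: an edge of $\Gamma$ is a pair of traces at Hamming distance exactly $1$, so pairwise far-apart traces would make $\Gamma$ nearly edgeless, and the intermediate points of a ``walk one coordinate at a time'' between two far traces need not be traces of members of $\mathcal{S}$ at all, so they contribute no edges. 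In Haussler's actual argument the $\delta$-separation is used in the opposite regime, on \emph{collisions}: sample $k\asymp d/\delta$ points, condition on all but one (uniformly chosen) sample point, and note that a trace class on the remaining $k-1$ points containing two distinct members of $\mathcal{S}$ is split by the last point with probability at least about $\delta$ (done in a weighted form to handle large classes). The expected number of such splits is then bounded \emph{above} by $\tfrac{d}{k}\,\EE\abs{\mathcal{S}|_M}$, using the ``at most $d\abs{\mathcal{F}}$ edges'' lemma for the unit-distance graph of the traces on the full sample together with the symmetry of the distinguished coordinate. Comparing the two bounds shows that in expectation almost all sets of $\mathcal{S}$ have distinct traces, so $\abs{\mathcal{S}}=O(1)\cdot\EE\abs{\mathcal{S}|_M}\le O\bigl((ek/d)^d\bigr)$; the output of the comparison is this bound on $\abs{\mathcal{S}}$, not the inequality ``$\delta m\le \mathrm{const}\cdot d$'' (that relation holds by the \emph{choice} $m\asymp d/\delta$).

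Step~(1) as written also overreaches: with $m\asymp d/\delta$ you cannot conclude that ``with positive probability all traces are distinct'' — the union bound requires $m\gtrsim \delta^{-1}\log\abs{\mathcal{S}}$ and then only yields the weaker bound $\bigl(O(\delta^{-1}\log(1/\delta))\bigr)^d$; avoiding exactly this logarithmic loss is the whole point of Haussler's refinement, so you cannot both take $m\asymp d/\delta$ and demand distinct traces. As it stands the sketch therefore does not assemble into a proof. Either cite \cite{H95} as the paper does (any absolute constant in place of $30$ would serve the paper equally well), or rebuild step~(2) around the split-class counting described above, after which the constant is indeed just bookkeeping.
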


By taking a maximal $\delta$-separated collection of translates of $A \subseteq G$, we deduce, below, that $A$ must be $\delta$-close to many of its own translates.

\begin{lemma} \label{lem:invariant}
Let $G$ be a finite abelian group, and $A \subseteq G$ a subset with VC dimension at most $d$, and $\delta > 0$. Then
\[
\abs{\{x : \abs{A \Delta (A + x)} \le \delta \abs{G}\}} \ge (\delta/30)^d \abs{G}.
\]
\end{lemma}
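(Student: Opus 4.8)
The plan is to apply Haussler's packing lemma (Lemma~\ref{lem:haussler}) to the set system of all translates $\mathcal{S} := \set{A + x : x \in G}$, which has VC dimension at most $d$ by hypothesis. First I would fix a \emph{maximal} $\delta$-separated subcollection $\mathcal{T} = \set{A + y_1, \dots, A + y_m} \subseteq \mathcal{S}$. Since VC dimension only decreases when passing to a subcollection, Haussler's lemma bounds $m \le (30/\delta)^d$. The point of maximality is the covering property: for every $x \in G$ there is an index $i$ with $\abs{(A+x)\Delta(A+y_i)} < \delta\abs{G}$ --- if $A+x \in \mathcal{T}$ this is trivial, and otherwise adjoining $A+x$ to $\mathcal{T}$ would destroy $\delta$-separation, forcing such an $i$.

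Next I would translate the covering property into a statement about $D := \set{z \in G : \abs{A \Delta (A+z)} \le \delta\abs{G}}$, the set we want to lower bound. Using translation-invariance of the symmetric difference --- the substitution $g \mapsto g - x$ gives $\abs{(A+x)\Delta(A+y)} = \abs{A \Delta (A + (y-x))}$ for all $x,y \in G$ --- the set $X_i := \set{x \in G : \abs{(A+x)\Delta(A+y_i)} < \delta\abs{G}}$ is exactly $\set{x : y_i - x \in D'}$ where $D' := \set{z : \abs{A\Delta(A+z)} < \delta\abs{G}} \subseteq D$; that is, $X_i \subseteq y_i - D$, so $\abs{X_i} \le \abs{D}$. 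The covering property says $\bigcup_{i=1}^m X_i = G$, hence
\[
\abs{G} \le \sum_{i=1}^m \abs{X_i} \le m\abs{D} \le (30/\delta)^d \abs{D},
\]
and rearranging yields $\abs{D} \ge (\delta/30)^d\abs{G}$, which is the claim.

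I do not expect a serious obstacle here --- the argument is short --- but the one subtlety worth flagging is to resist bounding $\abs{A\Delta(A+z)}$ by comparing two translates $A+x$, $A+x'$ through a common near-neighbor $A+y_i$ via the triangle inequality: that route only gives $\abs{A\Delta(A+(x'-x))} < 2\delta\abs{G}$ and loses a factor of $2$ in $\delta$ (hence a factor $2^d$ in the final bound). The clean way, as above, is to observe that each ``neighborhood'' $X_i$ is itself a translate (reflection) of $D$, so that a bare union bound over the $m \le (30/\delta)^d$ cells suffices with no loss. Everything else is the routine verification that translates of $A$ have VC dimension at most $d$ and that symmetric differences are translation-invariant.
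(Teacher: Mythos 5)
Your argument is correct and is essentially the paper's own proof: take a maximal $\delta$-separated family of translates of $A$, bound its size by Haussler's packing lemma, use maximality to cover $G$ by the ``$\delta$-neighborhoods,'' and observe that each neighborhood is a translate (reflection) of $B=\set{x:\abs{A\Delta(A+x)}\le\delta\abs{G}}$, so $\abs{G}\le(30/\delta)^d\abs{B}$. The only cosmetic difference is that the paper indexes the maximal separated family by group elements $W\subseteq G$ and writes the covering as $G=\bigcup_{w\in W}(B+w)$, which is exactly your union bound.
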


\begin{proof}
Let $W$ be a maximal subset of $G$ such that $\abs{(A + w) \Delta (A + w')} > \delta \abs{G}$ for all distinct $w,w' \in W$. We have $\abs{W} \le (30/\delta)^d$ by Lemma~\ref{lem:haussler}. Let
\[
B = \{x \in G : \abs{A \Delta (A+x)} \le \delta \abs{G}\}.
\]
Since $W$ is maximal, for every $x \in G$, there is some $w \in W$
such that $\abs{(A+x)\Delta (A+w)} \le \delta \abs{G}$, which implies
$x - w \in B$. Hence $G = \bigcup_{w \in W}(B+w)$. Therefore $\abs{B} \ge
\abs{G}/\abs{W} \ge (\delta/30)^d\abs{G}$.
\end{proof}

We quote a result from additive combinatorics. We use the following standard notation: $A+A = \{a+b : a,b\in A\}$, $A - A = \{a-b : a,b\in A\}$, and $k A = A + \cdots + A$ ($k$ times).

\begin{theorem}[Bogolyubov--Ruzsa lemma for groups with bounded exponent] \label{thm:bog-exp}
	Let $G$ be an abelian group of exponent at most $r$, and $A \subseteq G$ a finite subset with $\abs{A+A}\le K\abs{A}$. Then $2A-2A$ contains a subgroup of $G$ of size at least $c_r(K)\abs{A}$ for some constant $c_r(K) > 0$.
\end{theorem}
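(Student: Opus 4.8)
The plan is to follow the classical route to the Bogolyubov--Ruzsa lemma: reduce to a finite ambient group, control iterated sumsets by Pl\"unnecke--Ruzsa, pass to a bounded-size additive model in which $A$ has density bounded below in terms of $r$ and $K$, run Bogolyubov's Fourier argument in the model, and transport the resulting subgroup back. First I would note that since $A$ is finite, $G' := \ang{A}$ is a finitely generated abelian group of exponent at most $r$, hence finite; replacing $G$ by $G'$ and translating, we may assume $G$ is finite and $0 \in A$. The Pl\"unnecke--Ruzsa inequality then turns the hypothesis $\abs{A+A} \le K\abs{A}$ into a bound $\abs{nA - mA} \le K^{n+m}\abs{A}$ on every iterated signed sumset.

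Next I would invoke a Ruzsa-type modeling argument adapted to bounded torsion (in this setting, the refinement of Ruzsa's modeling lemma due to Green and Ruzsa). Using the sumset bounds above together with a covering/compression argument, one obtains a finite abelian group $\wt G$ of exponent at most $r$ with $\abs{\wt G} \le C_1(r,K)\abs{A}$, a subset $A_0 \subseteq A$ with $\abs{A_0} \ge \abs{A}/C_2(K)$, and a Freiman $8$-isomorphism $\psi \colon A_0 \to \wt A \subseteq \wt G$; in particular $\wt A$ has density $\alpha := \abs{\wt A}/\abs{\wt G} \ge c_0(r,K) := 1/(C_1(r,K)C_2(K))$ in $\wt G$. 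I expect this to be the main obstacle: producing a model group whose size is bounded by a function of $r$ and $K$ alone while preserving the $8$-fold additive structure of $A$. It is also the step where the bounded-exponent hypothesis is genuinely used — it is what allows the model to be an honest finite group (rather than a coset progression, as in the torsion-free case) and what is ultimately responsible for the $r$-dependence of the constant $c_r(K)$.

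With the model in hand I would run Bogolyubov's argument inside $\wt G$. Writing $\wh f(\gamma) = \sum_x f(x)\overline{\gamma(x)}$ over the characters $\gamma$ of $\wt G$, the function $1_{\wt A} * 1_{\wt A} * 1_{-\wt A} * 1_{-\wt A}$ is supported exactly on $2\wt A - 2\wt A$ with nonnegative Fourier transform $\abs{\wh{1_{\wt A}}}^4$. Let $S = \set{\gamma : \abs{\wh{1_{\wt A}}(\gamma)} \ge \tfrac12\sqrt{\alpha}\,\abs{\wt A}}$; then Parseval gives $\abs{S} \le 4/\alpha^2$, and since $\wt G$ has exponent at most $r$ its annihilator $\wt H := \set{x \in \wt G : \gamma(x) = 1 \text{ for all } \gamma \in S}$ is a subgroup with $[\wt G : \wt H] = \abs{\ang{S}} \le r^{\abs{S}} \le r^{4/\alpha^2}$. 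For $x \in \wt H$, separating the contribution of $S$ (on which $\gamma(x) = 1$, contributing at least $\abs{\wt A}^4/\abs{\wt G}$) from the rest (bounded in absolute value by $\tfrac14\alpha\abs{\wt A}^3$ via Parseval) yields $(1_{\wt A} * 1_{\wt A} * 1_{-\wt A} * 1_{-\wt A})(x) \ge \tfrac34\alpha\abs{\wt A}^3 > 0$, so $\wt H \subseteq 2\wt A - 2\wt A$ and $\abs{\wt H} \ge r^{-4/\alpha^2}\abs{\wt G}$.

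Finally I would transport this subgroup back. The Freiman $8$-isomorphism $\psi$ extends canonically to a bijection $\Psi$ between the signed sumsets $kA_0 - \ell A_0$ and $k\wt A - \ell\wt A$ for $k + \ell \le 8$, satisfying $\Psi(x + y) = \Psi(x) + \Psi(y)$ whenever $x, y \in 2A_0 - 2A_0$. Setting $H := \set{x \in 2A_0 - 2A_0 : \Psi(x) \in \wt H}$, a set of size $\abs{\wt H}$ contained in $2A_0 - 2A_0 \subseteq 2A - 2A$, one checks it is a subgroup of $G$: for $x, y \in H$, the element $\Psi(x) + \Psi(y) \in \wt H \subseteq 2\wt A - 2\wt A$ has a unique preimage in $2A_0 - 2A_0$ by injectivity of $\Psi$ on the $8$-fold sumset, which must be $x + y$, so $x + y \in H$, and likewise $-x, 0 \in H$. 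Hence $2A - 2A$ contains a subgroup of size $\abs{\wt H} \ge r^{-4/\alpha^2}\abs{A}/C_2(K) \ge c_r(K)\abs{A}$ with $c_r(K) := r^{-4/c_0(r,K)^2}/C_2(K) > 0$; the quantitative shape of $c_r(K)$ is governed entirely by the bound obtained in the modeling step, which is the one genuinely nontrivial input.
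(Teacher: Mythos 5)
The paper does not actually prove Theorem~\ref{thm:bog-exp}: it is stated with the preface ``we quote a result from additive combinatorics,'' with the qualitative version going back to Ruzsa and the best bound $c_r(K)=e^{-O_r(\log^4 2K)}$ credited to Sanders~\cite{San12}. So there is no internal proof to compare against; what you have written is a sketch of the classical argument, and it is essentially sound. Your Fourier step checks out: with $\alpha=\abs{\wt A}/\abs{\wt G}$, Parseval indeed gives $\abs{S}\le 4/\alpha^2$, the dual group also has exponent at most $r$ so the annihilator has index at most $r^{4/\alpha^2}$, and the large/small spectrum split gives value at least $\tfrac34\alpha\abs{\wt A}^3>0$ on the annihilator. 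The transport step is also fine: the closure relation $w=x+y$ that you need only involves equality of $6$-fold sums of elements of $A_0$ on each side, so a Freiman $8$-isomorphism (which is in particular a $6$-isomorphism) suffices, and the preimage of $\wt H$ is genuinely a subgroup inside $2A_0-2A_0\subseteq 2A-2A$. The one caveat is that all of the real content sits in the bounded-torsion modeling lemma, which you invoke rather than prove, so your argument is a reduction of one quoted theorem to another; that is defensible here only because the paper itself uses Theorem~\ref{thm:bog-exp} as a black box. Two further remarks. First, in the bounded-exponent setting you can bypass Freiman isomorphisms entirely: Ruzsa's theorem (Theorem~\ref{thm:Freiman-groups}, also quoted in the paper) places $A$ inside a subgroup $G'$ with $\abs{G'}=O_{r,K}(1)\abs{A}$, and running your Bogolyubov computation directly inside $G'$ already yields the desired subgroup of $2A-2A$, with no modeling or transport step; this also shows the modeling lemma you cite follows trivially from Theorem~\ref{thm:Freiman-groups} in this setting. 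Second, the constant your argument produces is enormously weaker than Sanders's $e^{-O_r(\log^4 2K)}$; this is fine for the statement as quoted, but the paper's downstream quantitative choices (for example $K(\delta)=\exp((\log 1/\delta)^{1/5})$ and the $\delta^{o(1)}$ rates in Lemma~\ref{lem:reg-key}) rely on Sanders's stronger bound, so your version would degrade, though not destroy, the final $\epsilon^{-d-o(1)}$ shape.
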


The name ``Bogolyubov--Ruzsa lemma'' was given by Sanders~\cite{San12},
who proved the theorem with the current best bound $c_r(K) =
e^{-O_r(\log^4 2K)}$ (see \cite[Theorem 11.1]{San12}). We refer the
readers to the introductions of \cite{San12,San13} for the history of
this result. A version of the theorem for $G = \ZZ$ was initially proved
by Ruzsa~\cite{Ruz94} as a key step towards his proof of Freiman's
theorem. The assertion of the 
polynomial Freiman--Ruzsa conjecture, a central open problem
in additive combinatorics, would follow from an improvement of the bound
to $c_r(K) = K^{-O_r(1)}$.

In our next lemma, we start from the conclusion of
Lemma~\ref{lem:invariant}, which gives us a large set $B$ such that $A
\approx A+x$ for all $x \in B$. Consider the sequence $B, 2B, 4B, 8B,
\dots$. Since $B$ is large, the size of $2^iB$ cannot keep on growing,
so we can find a set $B' = 2^iB$ with small doubling $\abs{B'+B'}
\le K\abs{B'}$, and $i$ not too large. Theorem~\ref{thm:bog-exp} then
implies that $2B'-2B'$ contains a large subgroup, in which every element $x$
satisfies $A \approx A +x$, which is close to what we need.
 
\begin{lemma} \label{lem:reg-key}

	Fix a positive integer $r$. Let $G$ be a finite abelian group of
	exponent at most $r$. Let $0 < \delta < 1/2$, $C > 0$, and $A \subseteq G$. Let
	$B = \{x \in G : \abs{A\Delta(A+x)}\le \delta \abs{G}\}$. Suppose $\abs{B} \ge \delta^C \abs{G}$. Then there exists a subgroup $H$
	of $G$ with $\abs{H} \ge \delta^{o(1)} |B|$ such that $\abs{A
	\Delta (A+x)} \le \delta^{1-o(1)} \abs{G}$ for all $x \in H$,
	and furthermore there exists a union $S$ of $H$-cosets such that
	$\abs{A \Delta S} \le \delta^{1-o(1)} \abs{G}$.
	Here $o(1)$ is a quantity that goes to zero as $\delta \to 0$, at a rate that may depend on $r$ and $C$.
\end{lemma}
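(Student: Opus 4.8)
The plan is to carry out the strategy sketched just above, arranging for all three ``$o(1)$'' losses to come from one carefully chosen parameter. Call $y\in G$ \emph{$\eta$-good} if $\abs{A\Delta(A+y)}\le\eta\abs{G}$, so that $B$ is exactly the set of $\delta$-good elements. I would first record the elementary facts: $0\in B$; $B=-B$ since $\abs{A\Delta(A+y)}=\abs{(A-y)\Delta A}$; and, by the triangle inequality $\abs{A\Delta(A+y+z)}\le\abs{A\Delta(A+y)}+\abs{(A+y)\Delta(A+y+z)}$ applied inductively, every element of the $k$-fold sumset $kB$ is $k\delta$-good. Since $0\in B$, the sumsets $B\subseteq 2B\subseteq 4B\subseteq\cdots$ are nested, so $\abs{2^jB}\ge\abs{B}$ for every $j$.

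Next I would run the doubling descent. Fix a parameter $K=K(\delta)\ge 2$, to be pinned down at the end, subject only to $\log K\to\infty$ and $\log^4 K=o(\log(1/\delta))$ as $\delta\to 0$. With $B_j=2^jB$ and $m=\ceil{C\log(1/\delta)/\log K}$, if $\abs{B_{j+1}}=\abs{2B_j}>K\abs{B_j}$ held for all $j=0,\dots,m-1$ then $\abs{B_m}>K^m\abs{B}\ge K^m\delta^C\abs{G}\ge\abs{G}$, a contradiction; so there is some $i<C\log(1/\delta)/\log K$ for which $B':=2^iB$ satisfies $\abs{B'+B'}\le K\abs{B'}$. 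I would then apply Theorem~\ref{thm:bog-exp} to $B'$: the set $2B'-2B'$ contains a subgroup $H$ of $G$ with $\abs{H}\ge c_r(K)\abs{B'}\ge c_r(K)\abs{B}$. Finally, because $B=-B$, one has $2B'-2B'=2^{i+1}B-2^{i+1}B=2^{i+2}B$, so every $x\in H$ is $2^{i+2}\delta$-good.

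The one genuinely delicate point, and the step I expect to be the main obstacle, is balancing the choice of $K$ so that both quantitative claims come out right. From $i<C\log(1/\delta)/\log K$ one gets $2^{i+2}\le 4\,\delta^{-C\log 2/\log K}=\delta^{-o(1)}$ since $\log K\to\infty$, so every $x\in H$ satisfies $\abs{A\Delta(A+x)}\le\delta^{1-o(1)}\abs{G}$; and Sanders' bound $c_r(K)=e^{-O_r(\log^4(2K))}$ together with $\log^4 K=o(\log(1/\delta))$ gives $c_r(K)=\delta^{o(1)}$, hence $\abs{H}\ge\delta^{o(1)}\abs{B}$. The constraints $\log K\to\infty$ (so $K$ is large enough) and $\log^4 K=o(\log(1/\delta))$ (so $K$ is small enough) are simultaneously satisfiable, e.g.\ by $\log K=(\log(1/\delta))^{1/5}$; this is precisely where the argument is forced to lose an $o(1)$ in the exponent, and an honest polynomial bound would follow if the polynomial Freiman--Ruzsa conjecture $c_r(K)=K^{-O_r(1)}$ were known, since then $K$ could be taken to be a large constant.

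It remains to produce $S$, which I would do by a majority vote: put a coset $c$ of $H$ into $S$ iff $\abs{A\cap c}\ge\abs{H}/2$. Writing $a_c=\abs{A\cap c}$ and using that for $x\in H$ the set $(A+x)\cap c$ is a translate of $A\cap c$ inside $c$, the standard identity $\EE_{x\in H}\abs{T\Delta(T+x)}=2\abs{T}(\abs{H}-\abs{T})/\abs{H}$ for $T\subseteq H$ gives
\[
\EE_{x\in H}\abs{A\Delta(A+x)} \;=\; \frac{2}{\abs{H}}\sum_c a_c(\abs{H}-a_c) \;\le\; \delta^{1-o(1)}\abs{G}.
\]
The contribution of a coset $c$ to $\abs{A\Delta S}$ is $\min(a_c,\abs{H}-a_c)\le\tfrac{2}{\abs{H}}a_c(\abs{H}-a_c)$, so summing over all cosets yields $\abs{A\Delta S}\le\delta^{1-o(1)}\abs{G}$, which completes the plan.
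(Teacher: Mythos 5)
Your proof is correct and follows essentially the same route as the paper: a doubling descent on the iterated sumsets $2^jB$ to find $B'=2^iB$ with $\abs{B'+B'}\le K\abs{B'}$, an application of the Bogolyubov--Ruzsa lemma to place a subgroup $H$ inside $2B'-2B'$ (with the same choice $\log K\approx(\log(1/\delta))^{1/5}$ balancing $c_r(K)$ against $2^i$), and the same majority-vote construction of $S$ with the averaging bound $\min(a_c,\abs{H}-a_c)\le\frac{2}{\abs{H}}a_c(\abs{H}-a_c)$. The only cosmetic difference is your use of $B=-B$ to write $2B'-2B'=2^{i+2}B$, which the paper handles by the same triangle-inequality estimate stated directly for $2\ell B-2\ell B$.
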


\begin{proof}
	Let $K = K(\delta) > 1$ to be decided. We cannot have $|2^{i+1} B| > K |2^{i} B|$ for every $0 \le i \le \log_K(|G|/|B|)$ since otherwise we would have $|2^i B| > |G|$ for some $i$, which is impossible as $2^i B$ is a subset of $G$. Thus $|2^{i+1} B| \le K |2^{i} B|$ for some $i \le \log_K(|G|/|B|) \le C \log_K (1/\delta)$, and letting $\ell = 2^i$, we have 
	\begin{equation}
		\label{eq:2l-K}
	|2\ell B| \le K|\ell B| \quad \text{with} \quad \ell \le 2^{C \log (1/\delta)/\log K} = \delta^{- O(1/\log K)}.
	\end{equation}
	
	Since $|(A + x) \Delta A | \le \delta |G|$ for all $x \in B$, we have, by the triangle inequality,
	\[
	|(A + x + y) \Delta A| \le |(A + x + y) \Delta (A + y)| + |(A + y) \Delta A| = |(A + x) \Delta A| + |(A + y) \Delta A| \text{ for all } x,y \in G.
	\]
	Thus
	\begin{equation}
		\label{eq:2l-2l}
	|A \Delta (A + x) | \le 4\ell \delta \abs{G} \quad \text{ for all } x \in 2\ell B - 2\ell B.
	\end{equation}
	By Theorem~\ref{thm:bog-exp} and \eqref{eq:2l-K}, $2\ell B - 2\ell B$ contains a subgroup $H$ of $G$ with $\abs{H} \ge c_r(K) \abs{\ell B} \ge c_r(K) \abs{B}$. This would complete the proof of the first claim in the lemma provided that $K = K(\delta) \to \infty$ slowly enough as $\delta \to 0$ so that $c_r(K) = \delta^{o(1)}$ (then $\ell \le \delta^{-O(1/\log K)} = \delta^{-o(1)}$). Concretely, Theorem~\ref{thm:bog-exp} with Sander's $c_r(K) = e^{-O_r(\log^4 2K)}$ allows us to take $K(\delta) = \exp((\log 1/\delta)^{1/5})$, say, so that all the $o(1)$'s in the exponents decay as $(\log(1/\delta))^{-1/5}$.
	
	For the second claim, let $S$ be the union of all $H$-cosets $y+H$ with $\abs{A \cap (y+H) } \ge \abs{H}/2$. 
	Then
	\begin{align*}
	\abs{A \Delta S}
	&= \sum_{y \in G/H}  \min\set{\abs{A \cap (y+H)}, \abs{H} - \abs{A \cap (x+H)}}
	\\
	&\le \sum_{y \in G/H}  \frac{2}{\abs{H}}\abs{A \cap (y+H)}( \abs{H} - \abs{A \cap (y+H)})
	\\
	&= \frac{1}{\abs{H}} \sum_{x \in H} \abs{A \Delta (A+x)} \quad \text{\footnotesize[counting pairs in $A \times (G\setminus A)$ lying in the same $H$-coset]}
	\\
	&\le 4\ell \delta \abs{G} = \delta^{1-o(1)} \abs{G}. \quad \text{\footnotesize[by \eqref{eq:2l-2l}]}
	\end{align*}
\end{proof}

The regularity lemma, Theorem~\ref{thm:reg}, then follows immediately after combining Lemmas~\ref{lem:invariant} and \ref{lem:reg-key}.

\medskip 

Instead of applying the Bogolyubov--Ruzsa lemma as we do above, it is
also possible to prove Lemma~\ref{lem:reg-key} using Freiman's theorem
for groups of bounded exponent:

\begin{theorem} [Ruzsa~\cite{Ruz99}] \label{thm:Freiman-groups} 
If $A$ is
a finite subset of an abelian group of exponent at most $r$ such that
$\abs{A+A} \le K\abs{A}$, then $A$ is contained in a subgroup of size
$O_{r, K}(1)\abs{A}$.
\end{theorem}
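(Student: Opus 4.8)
The plan is to derive this from the Bogolyubov--Ruzsa lemma (Theorem~\ref{thm:bog-exp}): that result hands us a genuine subgroup sitting inside $2A-2A$, and the bounded-exponent hypothesis is then exactly what is needed to bound how far $\ang{A}$ can spread beyond it.

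Concretely, I would first record the standard sumset estimates following from the Pl\"unnecke--Ruzsa inequality: $\abs{A+A}\le K\abs{A}$ implies $\abs{2A-2A}\le K^4\abs{A}$ and $\abs{3A-2A}\le K^5\abs{A}$ — any bound of the shape $K^{O(1)}\abs{A}$ would do here, so the cruder estimates obtained by iterating the Ruzsa triangle inequality also suffice. Applying Theorem~\ref{thm:bog-exp} gives a subgroup $H\subseteq 2A-2A$ with $\abs{H}\ge c_r(K)\abs{A}$. Now observe $A+H\subseteq A+(2A-2A)=3A-2A$; since $A+H$ is $H$-invariant it is a disjoint union of $\abs{A+H}/\abs{H}\le K^5/c_r(K)=:L$ cosets of $H$, and $A\subseteq A+H$, so $A$ is contained in at most $L$ cosets of $H$. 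Also $H\subseteq 2A-2A\subseteq\ang{A}$, so $H$ is a subgroup of $\ang{A}$ and $\ang{A}/H$ is generated by the image of $A$, which is a subset of size at most $L$ of a group of exponent at most $r$. An abelian group with at most $L$ generators and exponent at most $r$ has order at most $r^L$ (it is a quotient of $(\ZZ/r\ZZ)^L$), so $[\ang{A}:H]\le r^L$ and
\[
\abs{\ang{A}}=[\ang{A}:H]\cdot\abs{H}\le r^L\cdot K^4\abs{A}=O_{r,K}(1)\abs{A}.
\]
Since $A\subseteq\ang{A}$ — and note no translation of $A$ was ever required, as $2A-2A\subseteq\ang{A}$ unconditionally — this is the desired conclusion.

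Essentially all the content is carried by Theorem~\ref{thm:bog-exp}; the rest is bookkeeping, and the only place the bounded exponent enters is the final count $[\ang{A}:H]\le r^L$. I therefore expect no genuine obstacle, but I would flag that the bound obtained this way is weak: with Sanders' $c_r(K)=e^{-O_r(\log^4 2K)}$ one gets $L=e^{O_r(\log^4 2K)}$ and hence $\abs{\ang{A}}\le \exp\bigl(e^{O_r(\log^4 2K)}\bigr)\abs{A}$, double-exponential in $K$, whereas a more careful covering-lemma argument yields a single-exponential $r^{K^{O(1)}}\abs{A}$. Since the statement only asserts $O_{r,K}(1)$, this loss is harmless for our purposes.
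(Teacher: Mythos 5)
The paper never proves this statement: it is imported as a black box from Ruzsa's paper \cite{Ruz99}, quoted only so that it can serve as an alternative to Theorem~\ref{thm:bog-exp} in the second proof sketch of Lemma~\ref{lem:reg-key}. Your derivation is correct, but it goes through the other black box instead: granting the Pl\"unnecke--Ruzsa inequality ($\abs{2A-2A}\le K^4\abs{A}$, $\abs{3A-2A}\le K^5\abs{A}$) and Theorem~\ref{thm:bog-exp}, the subgroup $H\subseteq 2A-2A$ of size $\ge c_r(K)\abs{A}$ confines $A$ to at most $L=K^5/c_r(K)$ cosets of $H$, and then $\ang{A}/H$ is an abelian group of exponent at most $r$ generated by at most $L$ elements, hence of order at most $r^L$, so $\ang{A}$ is the desired subgroup. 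There is no circularity: Sanders' proof of Theorem~\ref{thm:bog-exp} (via Croot--Sisask almost periodicity) does not rely on Ruzsa's theorem, and the implication ``Bogolyubov--Ruzsa plus a covering step gives Freiman-type structure'' is standard --- it is the same direction as the paper's remark that $c_r(K)=K^{-O_r(1)}$ would yield the polynomial Freiman--Ruzsa conjecture. What the two routes buy: Ruzsa's original argument is elementary (a covering argument, no almost-periodicity machinery) and gives a single-exponential bound of the shape $K^{O(1)}r^{K^{O(1)}}\abs{A}$, whereas your route invokes a much deeper theorem and, as you note, returns a far worse constant; since the statement only claims $O_{r,K}(1)$, and since in the paper's application $K=K(\delta)$ is chosen to grow slowly enough that any bound depending only on $r$ and $K$ becomes $\delta^{-o(1)}$, this loss is immaterial. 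Two small points: the quotient $\ang{A}/H$ is a quotient of $(\ZZ/e\ZZ)^L$ with $e\le r$ the actual exponent (not necessarily of $(\ZZ/r\ZZ)^L$ unless $e$ divides $r$), which still gives the bound $r^L$; and your aside that iterating the Ruzsa triangle inequality alone yields $\abs{3A-2A}\le K^{O(1)}\abs{A}$ is not accurate --- controlling iterated sumsets from $\abs{A+A}\le K\abs{A}$ genuinely needs Pl\"unnecke--Ruzsa (or Petridis, or a covering lemma) --- but this does not affect your main argument, which uses Pl\"unnecke--Ruzsa anyway.
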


At the point in the proof of Lemma~\ref{lem:reg-key}
where we apply Theorem~\ref{thm:bog-exp}, we can instead apply Theorem~\ref{thm:Freiman-groups} to contain $\ell
B$ inside a subgroup of size $\delta^{-o(1)}\abs{\ell B}$. Now we apply a corollary of Kneser's theorem.

\begin{theorem}[Kneser's theorem~\cite{Kn}; see~{\cite[Theorem 5.5]{TV}}] 
Let $G$ be an abelian group and $A, B$ finite non-empty subsets. If $|A| + |B| \leq |G|$ then there
is a finite subgroup $H$ of $G$ such that 
\[
\abs{A + B} \ge  \abs{A + H} + \abs{B + H} - \abs{H}
 \ge \abs{A} + \abs{B} - \abs{H}.
\]
The subgroup $H$ can be taken to be the stabilizer of $A+B$: 
\[
H = \{ g \in G : g + ( A + B ) = ( A + B ) \}.
\]
\end{theorem}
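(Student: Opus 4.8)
This is the classical theorem of Kneser (the proof in \cite[Theorem~5.5]{TV}), and I would prove it by the standard argument based on the Dyson $e$-transform. Since the statement only asks for the \emph{existence} of a subgroup, and since the second inequality $\abs{A+H}+\abs{B+H}-\abs{H}\ge\abs{A}+\abs{B}-\abs{H}$ is immediate from $A\subseteq A+H$ and $B\subseteq B+H$, the entire content is to produce a finite subgroup $H$ with $\abs{A+B}\ge\abs{A+H}+\abs{B+H}-\abs{H}$; that $H$ may be taken to be the stabilizer $\mathrm{Stab}(A+B):=\{g:g+(A+B)=A+B\}$ then falls out of the argument (the hypothesis $\abs A+\abs B\le\abs G$ only serves to keep the bound meaningful and is not otherwise used). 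I would first reduce the problem: translating $A$ and $B$ changes neither $\abs A,\abs B,\abs{A+B}$ nor the stabilizer, so normalize $0\in A\cap B$; and replacing $(A,B)$ by $(A+H_0,B+H_0)$ with $H_0=\mathrm{Stab}(A+B)$ leaves $A+B$ and $H_0$ unchanged while turning the target into $\abs{A+B}\ge\abs A+\abs B-\abs{H_0}$ for $H_0$-periodic $A,B$. Quotienting by $H_0$ (the quotient sumset is then aperiodic, and $\abs{A+H_0}=\abs{H_0}\,\abs{\pi(A)}$ etc.) reduces everything to the core claim: \emph{if $\mathrm{Stab}(A+B)=\{0\}$ then $\abs{A+B}\ge\abs A+\abs B-1$}.

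For the core claim I would induct on $\abs B$; the base case $\abs B=1$ is trivial since $\abs{A+B}=\abs A$. For the inductive step I would use the Dyson $e$-transform: for $e\in G$ set $A_e=A\cup(B+e)$ and $B_e=B\cap(A-e)$, which satisfy $A_e+B_e\subseteq A+B$, $\abs{A_e}+\abs{B_e}=\abs A+\abs B$, $A\subseteq A_e$, $B_e\subseteq B$, and $B_e\ne\emptyset$ once $e\in A-B$. The dichotomy driving the induction is: either some admissible $e$ makes $B_e$ a proper nonempty subset of $B$ \emph{with $A_e+B_e=A+B$} — in which case the induction hypothesis applied to the smaller pair $(A_e,B_e)$ (whose sumset has the same, hence trivial, stabilizer) gives $\abs{A+B}=\abs{A_e+B_e}\ge\abs{A_e}+\abs{B_e}-1=\abs A+\abs B-1$ — or no such $e$ exists, and one shows this forces $B-B\subseteq\mathrm{Stab}(A+B)$, contradicting aperiodicity since $\abs B\ge2$. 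An equivalent packaging, which I find cleaner, is to iterate $e$-transforms ($e\in A'-B'$) so as to maximize $\abs{A'}$: for the resulting pair no transform can grow $A'$, which forces $A'+(B'-B')\subseteq A'$, so $A'$ is invariant under $K:=\langle B'-B'\rangle$ and $B'$ lies in a single $K$-coset, whence $\abs{A'+B'}=\abs{A'}=\abs A+\abs B-\abs{B'}\ge\abs A+\abs B-\abs K$, from which the conclusion for $A+B$ can be read off.

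The main obstacle is exactly this dichotomy, i.e.\ the bookkeeping needed to close the induction: one must either exhibit a transform that strictly shrinks $B$ \emph{without changing $A+B$} — so that the stabilizer, and hence the whole target inequality, is preserved when passing to the smaller pair — or else prove that the absence of such a transform forces $A+B$ to be periodic; and in the "maximal transform" version one must carefully transfer the periodicity statement about $A'+B'$ back to the original $A+B$ and its stabilizer. Everything else — the $e$-transform identities, the quotient computation in the reduction, and the degenerate base cases — is routine.
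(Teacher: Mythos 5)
The paper does not prove this statement at all: it is Kneser's theorem, quoted with citations to Kneser's paper and to Tao--Vu, so the only ``proof'' in the paper is the reference. Your framing is right in several places: the hypothesis $\abs{A}+\abs{B}\le\abs{G}$ is indeed not needed for the stabilizer form, the reduction (translate, pass to the quotient by $H_0=\mathrm{Stab}(A+B)$, note that the quotient sumset is aperiodic) is correct, and the $e$-transform identities and the base case are fine. The approach is the same $e$-transform strategy as in the cited source.

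However, the inductive step has a genuine gap: your dichotomy is not exhaustive. The negation of ``some admissible $e$ makes $B_e$ a proper nonempty subset of $B$ \emph{with} $A_e+B_e=A+B$'' is not ``$B_e=B$ for every admissible $e$''; there is a third case, in which every transform that properly shrinks $B$ also strictly shrinks the sumset, and then your argument for $B-B\subseteq\mathrm{Stab}(A+B)$ (which needs $B+a-b\subseteq A$ for all $a\in A$, $b\in B$) does not apply. This case genuinely occurs with aperiodic sumset: take $A=\{0,1,\dots,9\}\cup\{100\}$ and $B=\{0,1\}$ (in $\ZZ$, or embedded in a large cyclic group). Here $\mathrm{Stab}(A+B)$ is trivial, yet one checks that every $e\in A-B$ with $B_e$ a proper nonempty subset of $B$ (namely $e\in\{-1,9,99,100\}$) loses one of the elements $0,10,100,101$ of $A+B$, so no sumset-preserving proper transform exists, while $B-B=\{-1,0,1\}\not\subseteq\{0\}$. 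Handling this ``sumset shrinks'' case is precisely the hard part of Kneser's theorem --- in Tao--Vu it requires a separate, delicate induction analyzing $(A+B)\setminus(A_e+B_e)$ and the stabilizer of the smaller sumset --- so it is not mere bookkeeping. The ``maximal $\abs{A'}$'' packaging has the same problem in another guise: it only yields $\abs{A+B}\ge\abs{A}+\abs{B}-\abs{K}$ for $K=\langle B'-B'\rangle$, a subgroup that need not stabilize $A+B$ and can be all of $G$ (making the bound vacuous), and upgrading this to $\abs{A+B}\ge\abs{A+H}+\abs{B+H}-\abs{H}$ with $H=\mathrm{Stab}(A+B)$ is again the real content of the theorem, not something that can be read off.
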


\begin{corollary}
If $G$ is an abelian group, $t$ is a positive integer, and $A \subset G$
has $|A| \geq |G|/t$ and $A$ generates $G$, then $2t A = G$.
\end{corollary}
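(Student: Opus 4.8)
The plan is to apply Kneser's theorem iteratively to the sumsets $A, 2A, 3A, \dots$ and show that the stabilizer must stabilize quickly, forcing some $kA$ to equal $G$. First I would dispose of a trivial reduction: since $A$ generates $G$, we may assume $A$ contains $0$ (translate $A$ by one of its elements; this does not change the hypotheses or the conclusion up to a translate, and in fact $2tA = G$ is translation-invariant in the appropriate sense — more carefully, if $0 \in A$ then $A \subseteq 2A \subseteq 3A \subseteq \cdots$, which is the monotonicity we want). So assume $0 \in A$, hence $iA \subseteq (i+1)A$ for all $i$.

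Next I would set up the main induction. Apply Kneser's theorem to the pair $(iA, A)$ as long as $|iA| + |A| \le |G|$: there is a subgroup $H_i$ (the stabilizer of $(i+1)A = iA + A$) with
\[
|(i+1)A| \ge |iA + H_i| + |A + H_i| - |H_i| \ge |iA| + |A| - |H_i|.
\]
The key observation is that if $|H_i|$ is small — say $|H_i| < |A|$ — then $|(i+1)A| \ge |iA| + |A| - |H_i| > |iA|$, so the sumset strictly grows by at least... well, by at least $|A| - |H_i| \ge 1$, but I want a quantitative gain. Since $|A| \ge |G|/t$, if $|H_i| \le |A|/2$ then the sumset grows by at least $|A|/2 \ge |G|/(2t)$ each step, so after at most $2t$ steps we would exceed $|G|$, a contradiction — meaning at some point either the process terminates because $|iA| + |A| > |G|$ (and then, since $(i+1)A \supseteq iA$ is $H_i$-invariant... hmm, need care here), or some stabilizer $H_i$ is large, $|H_i| > |A|/2 \ge |G|/(2t)$.

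The resolution of both terminal cases uses that $A$ generates $G$. In the first case, once $|iA| + |A| > |G|$, I can no longer apply Kneser directly, but I can observe that $(i+1)A = iA + A$ and use the trivial bound: actually the cleanest route is to note that in the "large stabilizer" case, $H_i$ is a subgroup invariating $(i+1)A$, and $(i+1)A$ is a union of $H_i$-cosets containing $0$'s coset; since $A$ generates $G$ and $A \subseteq (i+1)A$, enough further sumsets will fill out all of $G/H_i$, but $G/H_i$ has fewer than $2t$ elements (since $|H_i| > |G|/(2t)$), so in at most $2t$ further steps we get everything. Combining: in total at most $2t$ steps suffice, giving $2tA = G$. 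The main obstacle is handling the boundary case where Kneser's inequality stops being applicable ($|iA| + |A| > |G|$) cleanly — I expect the right move is to restructure the argument so that we always track the stabilizer $H$ of the current sumset and show $|H|$ is nondecreasing along the chain (stabilizers of $iA$ grow, since $iA + H_i$ being $H_i$-invariant feeds into the next step), so that either $H$ becomes all of $G$ (done) or $H$ stabilizes at some proper subgroup, in which case the sumsets must grow by full $H$-cosets and the counting $|A| \ge |G|/t$ bounds the number of cosets needed by $t$, hence $2tA = G$ with room to spare. I would double-check the exact constant $2t$ falls out, but the factor of $2$ is clearly the slack coming from "$|H_i| \le |A|/2$ vs. $> |A|/2$."
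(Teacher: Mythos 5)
Your core tool and iteration scheme (apply Kneser's theorem to $iA$ and $A$ with the stabilizer $H$ of the sumset, and show each step gains a constant fraction of $|A|$) is the same as the paper's, but two steps of your sketch do not hold up. First, the opening reduction ``assume $0\in A$ by translating'' is invalid: generation is not translation-invariant. In $G=\ZZ/4\ZZ$ the set $A=\{1,3\}$ generates $G$, but $A-1=\{0,2\}$ does not; moreover for this $A$ (with $t=2$, $|A|=|G|/2$) the sumsets $kA$ alternate between $\{1,3\}$ and $\{0,2\}$ and never equal $G$. So the point you try to wave through is exactly where an additional hypothesis such as $0\in A$ is genuinely needed and cannot be obtained for free (in the paper's application the relevant set $B$ does contain $0$; note also that the paper's parenthetical ``since $A$ generates $G$, $A+H$ is a union of at least two cosets of $H$'' implicitly uses that $A$ is not contained in a single coset of $H$, which is the same point --- a generating set can lie in one coset of a proper subgroup).

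Second, your bookkeeping does not deliver $2t$. Splitting into $|H_i|\le |A|/2$ versus $|H_i|>|A|/2$ costs a factor: you may spend up to about $2t$ steps before meeting a large stabilizer, and then up to $[G:H_i]-1<2t$ further steps to fill out $G/H_i$, so as sketched you get roughly $4t$, and you acknowledge not having checked the constant. The paper needs no case split: once $A$ meets at least two $H$-cosets, $|A+H|\ge\max(|A|,2|H|)$, hence $|A+H|-|H|\ge |A+H|/2\ge |A|/2$, so \emph{every} step with $(i+1)A\ne G$ satisfies $|(i+1)A|\ge |iA|+|A|/2$, and after $2t$ steps this would force $|2tA|\ge t|A|\ge|G|$, a contradiction unless some $kA=G$ with $k \le 2t$. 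Your worry about the boundary case $|iA|+|A|>|G|$ is a non-issue: by pigeonhole $iA$ then meets $g-A$ for every $g\in G$, so $(i+1)A=G$ outright. Finally, monotonicity (hence $0\in A$) is not needed to finish from ``some $kA=G$ with $k\le 2t$'': then $(k+1)A=kA+A=G+A=G$, so $2tA=G$ automatically.
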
 

\begin{proof}
For any $i$ such that $(i+1) A \ne G$, applying Kneser's theorem to the sets $iA$ and $A$ gives us a subgroup $H$ so that $|(i+1) A| \ge |iA+H| + |A+H| - |H| \ge |iA| + |A|/2$ (since $A$ generates $G$, $A+H$ is a union of at least two cosets of $H$, so $|H| \le |A+H|/2$ and $|A+H| \geq |A|$). Iterating gives $|2tA| \ge t |A| \ge |G|$.
\end{proof}

Let us continue with our discussion of the alternative approach to proving Lemma~\ref{lem:reg-key}. Since $\ell B$ occupies a $\delta^{o(1)}$-fraction of some subgroup, by the above corollary, $\ell' B$ is a subgroup (playing the role of $H$ in the first proof) for some $\ell' = \delta^{-o(1)}\ell$. From this
point we can proceed as the rest of the proof of Lemma~\ref{lem:reg-key}.

\section{A strengthened regularity lemma}

In the next section, we prove a removal lemma for bi-induced patterns. The regularity lemma we stated in Theorem~\ref{thm:reg} seems
not quite strong enough to establish the removal lemma. Below we prove a
strengthening, where the VC dimension hypothesis is weakened to a more
robust one. Instead of requiring that $A$ has bounded VC dimension,
we will ask that, with probability at least 0.9, say, the VC dimension
of the collection of translates of $A$ is bounded if we restrict the
ground set $G$ to a random set. We state the result below in the form
of two alternatives: either $A$ has high VC dimension when sampled,
or it satisfies a regularity lemma with polynomial bounds.

\begin{proposition}[Regularity lemma with robust VC dimension hypothesis] 
\label{prop:robust-reg}
	Fix positive integers $r$ and $d$.
	Let $G$ be a finite abelian group of exponent at most $r$. 
Let $A \subseteq G$. One of the following must be true for every
small $\epsilon>0$:
	\begin{enumerate}
	\item[(a)] For some $k = \epsilon^{-d-o(1)}$, if $X$  and $Y$ are random $k$-element subset of $G$, then we have $\vcdim\{(A+ x) \cap Y : x \in X\} > d$ with probability at least $0.9$.
	\item[(b)] There exists a subgroup $H$ of $G$ of index at most $\epsilon^{-d-o(1)}$ such that $\abs{A \Delta S} \le \epsilon\abs{G}$ for some union $S$ of $H$-cosets.
	\end{enumerate}
	Here $o(1)$ refers to a quantity that goes to zero as $\epsilon \to 0$, at a rate that can depend on $r$ and $d$.
\end{proposition}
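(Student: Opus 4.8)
The plan is to mimic the proof structure of Theorem 2.1 (via Lemmas 2.3, 2.5, 2.6), but to replace the use of Haussler's packing lemma (which needs a global VC-dimension bound) with a dichotomy: either a large separated system of translates exists — which will force high VC dimension on a random restriction, giving alternative (a) — or it does not, in which case we run the Bogolyubov–Ruzsa argument of Lemma 2.6 exactly as before to get alternative (b).

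Here is the sequence of steps. First, fix $\delta = \delta(\epsilon)$ (a small power of $\epsilon$, to be calibrated so that the $o(1)$ losses in Lemma 2.6 turn $\delta$ back into $\epsilon$), and consider a maximal $\delta$-separated collection $\mathcal{W}$ of translates of $A$, i.e. a maximal $W \subseteq G$ with $\abs{(A+w)\Delta(A+w')} > \delta\abs{G}$ for distinct $w, w' \in W$. There are two cases. \emph{Case 1: $\abs{W}$ is large}, say $\abs{W} > (30/\delta)^d$. Then the set system $\{A+w : w \in W\}$ is $\delta$-separated and too big to have VC dimension $\le d$ by the contrapositive of Haussler's lemma, so it has VC dimension $> d$: there is a $(d{+}1)$-element set $U \subseteq G$ shattered by $\{A+w : w\in W\}$. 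I now need to transfer this to a \emph{random restriction}. If $X$ and $Y$ are random $k$-element subsets of $G$ with $k = \epsilon^{-d-o(1)}$ chosen appropriately large (polynomial in $1/\delta$, using that $\abs{W}$ and the separation $\delta$ control the relevant probabilities), then with probability $\ge 0.9$ the set $Y$ contains one of the shattered $(d{+}1)$-sets and $X$ contains a full set of $2^{d+1}$ witnesses realizing all traces — more robustly, one argues that a random $Y$ of size $k$ shatters a $(d{+}1)$-set under $\{(A+x)\cap Y : x\in X\}$ with high probability because the original separated family is large; this is where a short probabilistic/counting argument is needed (e.g. second-moment or a union bound over the few bad configurations, using that distinct translates differ on a $\delta$-fraction so they remain distinguishable on a random sample of size $\gg \delta^{-1}\log(\cdot)$). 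This yields alternative (a).

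\emph{Case 2: $\abs{W} \le (30/\delta)^d$.} Then, exactly as in the proof of Lemma 2.3, maximality of $W$ gives $G = \bigcup_{w\in W}(B+w)$ where $B = \{x : \abs{A\Delta(A+x)} \le \delta\abs{G}\}$, whence $\abs{B} \ge \abs{G}/\abs{W} \ge (\delta/30)^d\abs{G}$. Now apply Lemma 2.6 with $C = d$ (absorbing the constant $30^d$ into the $o(1)$): we obtain a subgroup $H$ with $\abs{H} \ge \delta^{o(1)}\abs{B} \ge \delta^{d+o(1)}\abs{G}$ and a union $S$ of $H$-cosets with $\abs{A\Delta S}\le \delta^{1-o(1)}\abs{G}$. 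Choosing $\delta$ so that $\delta^{1-o(1)} \le \epsilon$ (e.g. $\delta = \epsilon^{1+o(1)}$) gives index of $H$ at most $\delta^{-d-o(1)} = \epsilon^{-d-o(1)}$ and $\abs{A\Delta S} \le \epsilon\abs{G}$, which is alternative (b). Finally I would check that the two cases are organized around a single threshold so that for every small $\epsilon$ at least one alternative holds, and that the value of $k$ in (a) and the index bound in (b) are both $\epsilon^{-d-o(1)}$ with the same kind of $o(1)$.

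The main obstacle is Step in Case 1: upgrading "there exists a large $\delta$-separated family of translates, hence a shattered $(d{+}1)$-set $U$ in the full group $G$" to "with probability $\ge 0.9$, a random pair of $k$-sets $X,Y$ exhibits VC dimension $> d$ for the restricted system." One cannot simply ask $Y \supseteq U$ (that has probability $\approx (k/\abs{G})^{d+1}$, which is tiny). The robust statement must instead exploit that the separated family is \emph{large} — of size close to $(30/\delta)^d$ — so that shattered $(d{+}1)$-sets are abundant, or alternatively use Haussler's lemma again at the level of the random sample to say that if the sampled system had VC dimension $\le d$ it would be small, contradicting the persistence of $\delta$-separation under random sampling (distinct translates stay distinct on $Y$ with high probability once $\abs{Y}\gtrsim \delta^{-1}\log\abs{W}$). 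Getting the quantitative relationship between $k$, $\delta$, $d$ and the $0.9$ success probability right — so that $k$ comes out as $\epsilon^{-d-o(1)}$ and not something larger — is the delicate part; everything else is a direct reuse of Section 2.
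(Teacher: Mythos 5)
Your Case 2 is essentially correct and matches the paper: maximality of $W$ gives $G=\bigcup_{w\in W}(B+w)$, hence $\abs{B}\ge\abs{G}/\abs{W}\ge(\delta/30)^d\abs{G}$, and Lemma~\ref{lem:reg-key} with $\delta=\epsilon^{1+o(1)}$ yields alternative (b). The gap is Case 1, which you yourself label ``the delicate part'': as written it is a statement of the difficulty rather than an argument, and the missing step is exactly the technical heart of the paper's proof. The key point you do not address is that alternative (a) concerns the system $\set{(A+x)\cap Y : x\in X}$, whose sets are translates indexed by the \emph{random} set $X$. A large $\delta$-separated family $\set{A+w : w\in W}$ with $W$ a fixed subset of $G$ is of no direct use: $W$ may have density as small as $(30/\delta)^d/\abs{G}$, so a random $k$-element $X$ will typically contain no element of $W$ at all, and neither of your two suggested routes (abundance of shattered $(d{+}1)$-sets, or Sauer--Shelah on the sample plus ``distinct translates stay distinct on $Y$'') supplies separated translates indexed by $X$ itself. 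In the paper this is the content of Lemma~\ref{lem:robust-vc-to-ball}: smallness of the ball $B=\set{x:\abs{A\Delta(A+x)}\le\delta\abs{G}}$ means the Cayley graph generated by $B\setminus\{0\}$ has small maximum degree, so by the random greedy independent-set lemma (Lemma~\ref{lem:ind-set}) a random $X$ of size $12m^d$ contains $3m^d$ elements whose translates are pairwise $\delta$-separated; only then does the sampling lemma (Lemma~\ref{lem:sep-sample}, i.e.\ Sauer--Shelah via Theorem~\ref{thm:vc} applied on $Y$, with $m\asymp\delta^{-1}\log(1/\delta)$ so that $e^{-m^d}+3m^{2d}(1-\delta)^m<0.1$) force $\vcdim\set{(A+x)\cap Y:x\in X}>d$ with probability at least $0.9$.

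Your dichotomy on $\abs{W}$ can in principle be repaired, but only by reducing it to the ball-size dichotomy the paper uses. Indeed, if $W$ is $\delta$-separated then the sets $w+B_{\delta/2}$, $w\in W$, are pairwise disjoint (triangle inequality), where $B_{\delta/2}=\set{x:\abs{A\Delta(A+x)}\le(\delta/2)\abs{G}}$; hence $\abs{W}$ large forces $B_{\delta/2}$ small, after which one must run the argument of Lemmas~\ref{lem:ind-set}, \ref{lem:sep-sample} and \ref{lem:robust-vc-to-ball} at radius $\delta/2$. Even then your threshold needs adjusting: Case 1 with cutoff $(30/\delta)^d$ only gives $\abs{B_{\delta/2}}<(\delta/30)^d\abs{G}$, which is weaker than the bound $\abs{B}<\abs{G}/(12m^d)$ (with $m\asymp\delta^{-1}\log(1/\delta)$) needed for the $0.9$ probability calibration, so the case split should be made at $12m^d$ (or, more simply, directly on $\abs{B}$, as the paper does, with Case 2 still feeding Lemma~\ref{lem:reg-key} since $\abs{G}/(12m^d)=\delta^{d+o(1)}\abs{G}$). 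As it stands, the proposal reuses Section 2 correctly but leaves unproved the one implication that distinguishes Proposition~\ref{prop:robust-reg} from Theorem~\ref{thm:reg}.
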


Recall that Lemma~\ref{lem:invariant} tells us that if $\vcdim A \le
d$, then  $B = \{x \colon \abs{A \Delta (A+x)} \le \delta \abs{G}\}$
has size at least $(\delta/30)^d\abs{G}$. We will derive a similar
bound for $B$ under the weaker hypothesis, namely the negation of
(a), from which we can deduce (b) using
Lemma~\ref{lem:reg-key} as in the proof of the previous regularity lemma
Theorem~\ref{thm:reg}.

\begin{lemma} \label{lem:ind-set}
Let $k \le n/2$ be positive integers.
In an $n$-vertex graph with maximum degree at most $n/k$, a random $k$-element subset of the vertices contains an independent set of size at least $k/4$ with probability at least $1 - e^{-k/8}$.
\end{lemma}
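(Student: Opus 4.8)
The plan is to use a greedy procedure to build an independent set inside the random subset, and then to show that it has the claimed size with high probability via a concentration argument. Let $R$ be a uniformly random $k$-element subset of the $n$ vertices, which we may think of as being revealed one vertex at a time: $R = \{v_1, \dots, v_k\}$ where $v_j$ is a uniformly random vertex among those not yet chosen. I process these vertices in order and build an independent set $I$ greedily: start with $I = \emptyset$, and when $v_j$ is revealed, add it to $I$ if it is not adjacent to any vertex already in $I$. The key point is to lower bound the probability that $v_j$ gets added, conditioned on the history so far.

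First I would establish the per-step bound. Suppose that at the time $v_j$ is about to be revealed we have $\abs{I} < k/4$; I claim that $v_j$ is added to $I$ with conditional probability at least $1/2$. Indeed, the "forbidden" vertices are those in the closed neighborhood of $I$; since $\abs{I} < k/4$ and the maximum degree is at most $n/k$, the number of forbidden vertices is at most $\abs{I}(1 + n/k) \le (k/4)(1 + n/k) = k/4 + n/4 \le n/2$, using $k \le n/2$ in the last step. Since $v_j$ is uniform over at least $n - k + 1 \ge n/2$ remaining vertices, and at most $n/2$ of \emph{all} vertices are forbidden, the number of \emph{available} choices for $v_j$ is at least $(n - k + 1) - n/2 \ge (n/2) - (n/2) $... more carefully: among the $n - j + 1 \ge n - k + 1 > n/2$ vertices still available to be chosen as $v_j$, at least $(n-j+1) - n/2 \ge (n-k+1) - n/2$ of them are not forbidden. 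Since $n - k + 1 > n/2$, this is a positive fraction; in fact the fraction of available vertices that are non-forbidden is at least $1 - (n/2)/(n/2) $ — here I should be slightly more careful and observe that the number of forbidden vertices among the available ones is at most the total number of forbidden vertices, which is at most $n/2$, while the number of available vertices is more than $n/2$, so the conditional probability of adding $v_j$ is at least $1 - (n/2)/(n - j + 1)$. Since $j \le k \le n/2$ gives $n - j + 1 > n/2$, this is at least... I would simplify by noting $n - j + 1 \ge n/2$ so the probability of adding is at least $1/2$ — in fact I can afford to be generous since I only need the bound to hold while $\abs I < k/4$, and one can check the clean statement: whenever $\abs{I} < k/4$, the conditional probability that $v_j \notin N[I]$ is at least $1/2$.

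Next I would convert this into a high-probability statement by stochastic domination. Let $Z_j$ be the indicator that $v_j$ is added to $I$ at step $j$ (so $\abs{I} = \sum_{j=1}^k Z_j$ at the end). Define the stopping time $\tau$ to be the first step at which $\abs{I}$ reaches $\lceil k/4 \rceil$; if $\tau \le k$ we are done, so assume not. Then for every $j \le k$ we have $\abs{I} < k/4$ just before step $j$, so by the per-step bound each $Z_j$ conditionally dominates an independent $\mathrm{Bernoulli}(1/2)$ variable. Hence $\abs{I} = \sum_{j=1}^k Z_j$ stochastically dominates a $\mathrm{Binomial}(k, 1/2)$ random variable $W$, and
\[
\PP\!\left[ \abs{I} < k/4 \right] \le \PP\!\left[ W < k/4 \right] \le e^{-k/8},
\]
where the last inequality is the standard Chernoff bound $\PP[W \le (1-\delta)\EE W] \le e^{-\delta^2 \EE W/2}$ with $\delta = 1/2$ and $\EE W = k/2$. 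Since $I \subseteq R$ is independent with $\abs{I} \ge k/4$ except on an event of probability at most $e^{-k/8}$, the lemma follows.

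I expect the only genuinely delicate point to be the bookkeeping in the per-step bound: one must simultaneously track that $R$ is sampled without replacement (so the denominator $n - j + 1$ shrinks) and that the forbidden set $N[I]$ grows, and then verify the inequalities go through under the hypotheses $\abs{I} < k/4$ and $k \le n/2$ with enough room to land on the clean constant $1/2$. Everything after that is a routine coupling plus Chernoff argument.
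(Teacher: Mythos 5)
Your proposal follows the same route as the paper's proof: reveal a without-replacement sample $v_1,\dots,v_k$, build an independent set $I$ greedily, argue that each step succeeds with conditional probability at least $1/2$ as long as $\abs{I}<k/4$, dominate $\abs{I}$ by a $\mathrm{Binomial}(k,1/2)$ variable, and finish with a Chernoff-type tail bound; your stopping-time formulation of the domination is just a rephrasing of the paper's statement that $\abs{I}$ stochastically dominates $\min\{X,k/4\}$.

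However, two of your quantitative steps do not deliver what you claim, and both need repair to reach the stated constants. First, in the per-step bound, combining the closed-neighborhood estimate $\abs{N[I]}\le k/4+n/4\le n/2$ with the pool size $n-j+1> n/2$ only yields a success probability of at least $1-(n/2)/(n-j+1)\ge 0$, not $\ge 1/2$; the ``clean statement'' you assert is true, but the way to see it is to note that the vertices of $I$ have already been sampled and hence lie outside the remaining pool, so the forbidden vertices among the at least $n-k\ge n/2$ remaining ones are only the neighbours of $I$, at most $(k/4)(n/k)=n/4$ of them, giving success probability at least $1-(n/4)/(n-k)\ge 1/2$ --- exactly the paper's computation. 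Second, the Chernoff form you invoke, $\PP[W\le(1-\delta)\EE W]\le e^{-\delta^2\EE W/2}$ with $\delta=1/2$ and $\EE W=k/2$, gives only $e^{-k/16}$, which falls short of the claimed $e^{-k/8}$; to obtain $e^{-k/8}$ use instead Hoeffding's additive bound $\PP[W\le \EE W-t]\le e^{-2t^2/k}$ with $t=k/4$ (or the relative-entropy form of the Chernoff bound), which is what the paper's appeal to ``the Chernoff bound'' amounts to. With these two fixes your argument is correct and essentially coincides with the paper's.
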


\begin{proof}
Let $v_1, \dots, v_k$ be a sequence of $k$ vertices chosen uniformly
at random without replacement. Let $I$ be the independent set formed
greedily by, starting with the empty set, putting each $v_i$, sequentially
as $i=1, 2, \dots$, into $I$ if doing so keeps $I$ an independent
set. During the process, when at most $k/4$ elements are added to
$I$, the probability that a new $v_i$ is added to $I$ is at least $1 -
\frac{(k/4)(n/k)}{n-k} \ge \frac{1}{2}$, since among the remaining $n-k$
vertices, at most $(k/4)(n/k)$ of them are adjacent to vertices already
added to $I$ at this point. It follows that $|I|$ stochastically dominates
$\min\{X, k/4\}$, where $X$ is distributed as $\operatorname{Binomial}(k,
1/2)$. Thus $\PP(|I| < k/4) \le \PP(X < k/4) \le e^{-k/8}$ by the Chernoff
bound. Therefore, $\{v_1, \dots, v_k\}$ contains an independent
set $I$ of size at least $k/4$ with probability at least $1 - e^{-k/8}$.
\end{proof}

We recall a basic result on VC dimension.

\begin{theorem}[Sauer--Perles--Shelah theorem~\cite{Sau,She,VC}] \label{thm:vc}
	If $\mathcal{S}$ is a set system on a ground set of $n$ elements 
with VC dimension at most $d$, 
then $\abs{\mathcal{S}} \le \sum_{i=0}^d \binom{n}{i} \le 2n^d$.
\end{theorem}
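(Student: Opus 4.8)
The plan is to prove the first inequality $\abs{\mathcal S}\le\sum_{i=0}^d\binom ni$ by induction on the size $n$ of the ground set (simultaneously over all $d$), and then dispatch the bound $\sum_{i=0}^d\binom ni\le 2n^d$ by an elementary estimate. Write the ground set as $[n]=\{1,\dots,n\}$. When $n=0$ the system has at most one member, namely $\emptyset$, matching $\binom00=1$. For the inductive step, single out the element $n$ and pass to two set systems on $[n-1]$: the \emph{trace} $\mathcal S_1=\{S\setminus\{n\}:S\in\mathcal S\}$, and the system $\mathcal S_2=\{T\subseteq[n-1]:T\in\mathcal S\text{ and }T\cup\{n\}\in\mathcal S\}$ recording which traces are attained twice.

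The first ingredient is the counting identity $\abs{\mathcal S}=\abs{\mathcal S_1}+\abs{\mathcal S_2}$: fibering $\mathcal S$ over $\mathcal S_1$ via $S\mapsto S\setminus\{n\}$, a trace $T\in\mathcal S_1$ is the image of exactly one member of $\mathcal S$ unless both $T$ and $T\cup\{n\}$ lie in $\mathcal S$, in which case it is the image of two — and the latter case is exactly $T\in\mathcal S_2$. The second, and only nonformal, ingredient is the behaviour of VC dimension under these operations: $\vcdim\mathcal S_1\le d$ trivially, since a set $U\subseteq[n-1]$ shattered by traces is already shattered by $\mathcal S$; and, crucially, $\vcdim\mathcal S_2\le d-1$, because if $\mathcal S_2$ shatters $U\subseteq[n-1]$ then for every $U'\subseteq U$ there is $T\in\mathcal S_2$ with $T\cap U=U'$, whence \emph{both} $T$ and $T\cup\{n\}$ belong to $\mathcal S$ and realize $U'$ and $U'\cup\{n\}$ on $U\cup\{n\}$; so $\mathcal S$ shatters $U\cup\{n\}$ and $\abs U+1\le d$. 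Applying the inductive hypothesis to $\mathcal S_1$ (on $n-1$ points, VC dimension $\le d$) and to $\mathcal S_2$ (on $n-1$ points, VC dimension $\le d-1$), adding, and invoking Pascal's rule $\binom{n-1}{i}+\binom{n-1}{i-1}=\binom ni$, the bound telescopes to $\sum_{i=0}^d\binom ni$.

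For the second inequality, split into cases: if $d\le n/2$ then $\binom n0\le\cdots\le\binom nd$, so $\sum_{i=0}^d\binom ni\le(d+1)\binom nd\le(d+1)n^d/d!\le 2n^d$; if $d>n/2$ then $\sum_{i=0}^d\binom ni\le 2^n$, and $2^n\le 2n^d$ whenever $d>n/2$ (immediate once $n\ge 4$, and checked by hand for $n\le 3$). There is no real obstacle here — the whole argument hinges on the VC-dimension drop for $\mathcal S_2$, and everything else is bookkeeping. If one prefers to avoid induction, the same bound follows from down-compression: repeatedly replace a set $S\in\mathcal S$ with $i\in S$ by $S\setminus\{i\}$ whenever $S\setminus\{i\}\notin\mathcal S$; this preserves $\abs{\mathcal S}$, never increases the VC dimension, and eventually yields a downward-closed family, which — having VC dimension $\le d$ — can contain only sets of size $\le d$ and therefore has at most $\sum_{i=0}^d\binom ni$ members.
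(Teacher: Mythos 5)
The paper does not prove this statement at all: Theorem~\ref{thm:vc} is quoted as a classical result with citations to Sauer, Shelah, and Vapnik--Chervonenkis, so there is no in-paper argument to compare against. Your proof is the standard textbook induction and it is correct: the fibering identity $\abs{\mathcal{S}}=\abs{\mathcal{S}_1}+\abs{\mathcal{S}_2}$, the trivial bound $\vcdim\mathcal{S}_1\le d$, the key drop $\vcdim\mathcal{S}_2\le d-1$ (a set $U$ shattered by $\mathcal{S}_2$ forces $U\cup\{n\}$ to be shattered by $\mathcal{S}$, since both $T$ and $T\cup\{n\}$ lie in $\mathcal{S}$ for each trace), and Pascal's rule all check out, as does the elementary estimate $\sum_{i=0}^{d}\binom{n}{i}\le 2n^d$ via your two cases ($(d+1)/d!\le 2$ when $d\le n/2$, and $2^n\le 2n^d$ when $d>n/2$, verified directly for $n\le 3$). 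The only point worth making explicit is the degenerate step when $d=0$: there $\vcdim\mathcal{S}_2\le -1$ should be read as ``$\mathcal{S}_2$ shatters no set, not even $\emptyset$,'' which forces $\mathcal{S}_2=\emptyset$ and matches the empty-sum convention $\sum_{i=0}^{-1}\binom{n-1}{i}=0$; with that remark the induction over all $d\ge 0$ closes cleanly. Your alternative down-compression sketch is also a valid known route, though the claim that shifting never increases VC dimension deserves its own short argument if you were to rely on it rather than on the induction.
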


\begin{lemma} \label{lem:sep-sample}
	Let $0 < \delta < 1$, and let $m$ and $d$ be positive integers.
	Let $\mathcal{S}$ be a $\delta$-separated set system. Suppose that for a uniformly random $m$-element subset $M$, the restricted set system $\mathcal{S}|_M := \{T \cap M : T \in \mathcal{S}\}$ has VC dimension at most $d$ with probability at least $3m^{2d}(1-\delta)^m$. Then $\abs{\mathcal{S}} \le 2m^d$.
\end{lemma}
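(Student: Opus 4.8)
The plan is a proof by contradiction combined with a double count of ``collisions'' after restriction to a random $m$-set. Write $n$ for the size of the ground set of $\mathcal{S}$, and suppose for contradiction that $\abs{\mathcal{S}} \ge 2m^d+1$. The one probabilistic input needed is a bound on the chance that two distinct members of $\mathcal{S}$ coincide after intersecting with a uniformly random $m$-element subset $M$: since $S\cap M = T\cap M$ exactly when $M$ is disjoint from $S\Delta T$, and $\abs{S\Delta T}\ge \delta n$ by $\delta$-separation, a one-line computation with the hypergeometric distribution gives
\[
\PP\big[\,S\cap M = T\cap M\,\big] \;=\; \binom{n-\abs{S\Delta T}}{m}\Big/\binom{n}{m} \;\le\; (1-\delta)^m
\]
for all distinct $S,T\in\mathcal{S}$.

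Next I would fix a subfamily $\mathcal{S}'\subseteq\mathcal{S}$ of size exactly $2m^d+1$; the point of this particular size is that it is one more than the Sauer--Perles--Shelah bound (Theorem~\ref{thm:vc}) for an $m$-point ground set. Let $E$ be the event $\vcdim(\mathcal{S}|_M)\le d$, so $\PP[E]\ge 3m^{2d}(1-\delta)^m$ by hypothesis. On $E$, the restriction $\mathcal{S}'|_M$ is a subsystem of $\mathcal{S}|_M$ and hence also has VC dimension at most $d$, so Theorem~\ref{thm:vc} forces $\abs{\mathcal{S}'|_M}\le 2m^d < \abs{\mathcal{S}'}$, meaning some two members of $\mathcal{S}'$ collide on $M$. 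Thus the event ``$\mathcal{S}'$ has a colliding pair on $M$'' contains $E$, and together with the union bound and the estimate above this yields
\[
3m^{2d}(1-\delta)^m \;\le\; \PP\big[\,\textup{some distinct } S,T\in\mathcal{S}' \textup{ have } S\cap M = T\cap M\,\big] \;\le\; \binom{2m^d+1}{2}(1-\delta)^m .
\]
Since $\delta<1$, the positive factor $(1-\delta)^m$ cancels, leaving $3m^{2d}\le\binom{2m^d+1}{2}=2m^{2d}+m^d$, i.e.\ $m^{2d}\le m^d$, which is false for $m\ge 2$. This contradiction gives $\abs{\mathcal{S}}\le 2m^d$. (The borderline value $m=1$ is degenerate: there $3m^{2d}(1-\delta)^m\le 1$ already makes the hypothesis vacuous unless $\delta\ge 2/3$, and that sliver is checked directly.)

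The step I would flag as the crux is the passage to the subfamily of critical size $2m^d+1$. Running the same double count with all of $\mathcal{S}$ would pit the quadratic quantity $\binom{\abs{\mathcal{S}}}{2}$ against a lower bound that is only linear in $\abs{\mathcal{S}}$, and this produces no contradiction once $\abs{\mathcal{S}}$ is large; truncating to exactly $2m^d+1$ members is what makes the two sides comparable and lets the $(1-\delta)^m$ factors cancel. The remaining ingredients --- the hypergeometric estimate, monotonicity of VC dimension under restriction to a subsystem, the Sauer--Perles--Shelah bound, and a union bound --- are all routine.
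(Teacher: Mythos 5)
Your argument is correct for $m\ge 2$ and is essentially the paper's own proof: pass to a subfamily of size $2m^d+1$, bound the collision probability of each pair by $(1-\delta)^m$ via $\delta$-separation, take a union bound, and invoke Sauer--Perles--Shelah to force a collision on the event that the restricted VC dimension is at most $d$; the paper phrases the same count as ``with probability at least $1-\binom{2m^d+1}{2}(1-\delta)^m$ all restrictions stay distinct,'' which is your inequality rearranged. The one inaccurate point is the parenthetical about $m=1$: that sliver cannot be ``checked directly,'' since the statement itself fails there --- e.g.\ $m=d=1$, $\delta=2/3$, ground set $\{1,2,3\}$ and $\mathcal{S}=\{\emptyset,\{1,2\},\{1,3\},\{2,3\}\}$ is $\delta$-separated, the restriction to any one-point set always has VC dimension $1\le d$ so the hypothesis holds with probability $1\ge 3(1-\delta)$, yet $\abs{\mathcal{S}}=4>2m^d$; this is a harmless edge case that the paper's proof also silently ignores (its union bound yields no contradiction when $m=1$) and that is irrelevant to the application, where $m$ is large.
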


\begin{proof}
	Assume for contradiction that there exists such a set system with $\abs{\mathcal{S}} = 2m^d + 1$.
	Let $n$ be the size of the ground set.
	We have $\abs{S \Delta T} \ge \delta n$ for all distinct $S,T \in \mathcal{S}$. Then, for each pair of distinct $S,T \in \mathcal{S}$, with probability at least $1 - (1-\delta)^m$, $M$ intersects $S \Delta T$, so that $S$ and $T$ remain distinct when restricted to $M$. Taking a union bound over all pairs of sets in $\mathcal{S}$, we see that with probability at least $1 - \binom{\abs{\mathcal{S}}}{2}(1-\delta)^m \ge 1 - 3m^{2d}(1-\delta)^m$, all sets in $\mathcal{S}$ remain distinct when restricted to $M$, in which case $\vcdim(\mathcal{S}|_M) > d$ by Theorem~\ref{thm:vc} as $\abs{\mathcal{S}} > 2m^d$, a contradiction to the hypothesis.
\end{proof}

\begin{lemma} \label{lem:robust-vc-to-ball}
Let $m$ and $d$ be positive integers and $0 < \delta < 1$. Let $G$ be a finite abelian group of order at least $24m^d$. Let $X$ be a random $12m^d$-element subset of $G$, and $Y$ a random $m$-element subset of $G$. If $\vcdim\{(A+x) \cap Y : x \in X\} \le d$ with probability at least $e^{-m^d} + 3m^{2d}(1-\delta)^m$, then $B = \{x : \abs{A \Delta(A+x)} \le \delta \abs{G}\}$ has at least $\abs{G}/(12m^d)$ elements.
\end{lemma}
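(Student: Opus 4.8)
The goal is to show: if the "restricted VC dimension is $\le d$" event has decent probability, then $B$ occupies at least a $1/(12m^d)$-fraction of $G$. The natural strategy is the contrapositive together with Haussler-type packing. Suppose $|B| < |G|/(12m^d)$. I want to produce, with positive probability over the choice of $X$, a large subset of translates $\{A+x : x \in X\}$ that is $\delta$-separated, and then apply Lemma~\ref{lem:sep-sample} (with the random $m$-set $M = Y$) to force $\vcdim\{(A+x)\cap Y : x\in X\} > d$ with probability too large to be compatible with the hypothesis.

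The key combinatorial step is turning the smallness of $B$ into a $\delta$-separated sub-family. Consider the graph $\Gamma$ on vertex set $G$ where $x \sim x'$ iff $|(A+x)\Delta(A+x')| \le \delta|G|$, i.e.\ iff $x - x' \in B$ (here I use that $|(A+x)\Delta(A+x')| = |A \Delta (A + (x'-x))|$, so adjacency is exactly the condition $x' - x \in B$; note $B = -B$ since the symmetric difference is symmetric). Since $|B| < |G|/(12m^d)$, $\Gamma$ has maximum degree less than $|G|/(12m^d)$. Now let $X$ be a random $12m^d$-element subset of $G$; by Lemma~\ref{lem:ind-set} (with $n = |G|$, $k = 12m^d \le n/2$ using $|G| \ge 24m^d$), with probability at least $1 - e^{-12m^d/8} \ge 1 - e^{-m^d}$ the set $X$ contains an independent set $I$ in $\Gamma$ with $|I| \ge k/4 = 3m^d$. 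An independent set in $\Gamma$ is precisely a set of translates that is pairwise $\delta$-separated, so $\mathcal{S}_I := \{A+x : x \in I\}$ is a $\delta$-separated family with $|\mathcal{S}_I| \ge 3m^d > 2m^d$.

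Now I combine the two probabilistic events. On the one hand, with probability $\ge 1 - e^{-m^d}$ over $X$, such an $I \subseteq X$ exists and $\mathcal{S}_I$ is $\delta$-separated with more than $2m^d$ sets. On the other hand, by Lemma~\ref{lem:sep-sample} applied to the fixed $\delta$-separated system $\mathcal{S}_I$: since $|\mathcal{S}_I| > 2m^d$, it is \emph{not} the case that $\mathcal{S}_I|_Y$ has VC dimension $\le d$ with probability $\ge 3m^{2d}(1-\delta)^m$ (over the random $m$-set $Y$); so $\vcdim(\mathcal{S}_I|_Y) > d$ with probability $> 1 - 3m^{2d}(1-\delta)^m$, and since $\mathcal{S}_I|_Y \subseteq \{(A+x)\cap Y : x\in X\}$, the larger system also has VC dimension $> d$ in that event. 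Hence, choosing $X$ and $Y$ independently, the probability that $\vcdim\{(A+x)\cap Y : x\in X\} > d$ is at least $(1 - e^{-m^d}) \cdot$\,(something $> 1 - 3m^{2d}(1-\delta)^m$); a clean union bound gives this probability is $> 1 - e^{-m^d} - 3m^{2d}(1-\delta)^m$. Equivalently, $\vcdim\{(A+x)\cap Y : x\in X\} \le d$ with probability $< e^{-m^d} + 3m^{2d}(1-\delta)^m$, contradicting the hypothesis. Therefore $|B| \ge |G|/(12m^d)$.

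The step I expect to be the main obstacle is getting the probabilistic bookkeeping exactly right when combining the $X$-event (existence of a large independent set) with the $Y$-event (restriction stays separated, hence high VC dimension), since the independent set $I$ is itself random — one must be careful that for \emph{each} realization of $X$ admitting a suitable $I$, Lemma~\ref{lem:sep-sample}'s conclusion applies to that $I$, and then integrate over $X$. A minor point to verify carefully is the identity $|(A+x)\Delta(A+x')| = |A\Delta(A+(x'-x))|$ and the symmetry $B = -B$, which are what make "$x \sim x'$" a genuine Cayley-graph (translation-invariant) relation so that the max-degree bound is immediate from $|B|$.
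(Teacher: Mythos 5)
Your proof is correct and follows essentially the same route as the paper: assume $\abs{B}<\abs{G}/(12m^d)$, use the Cayley graph generated by $B$ together with Lemma~\ref{lem:ind-set} to find a $\delta$-separated family $\{A+x : x\in I\}$ of size at least $3m^d$ inside $X$, and then invoke Lemma~\ref{lem:sep-sample} with $M=Y$ to contradict the probability hypothesis. The only (immaterial) difference is bookkeeping: the paper fixes a single good $X$ by a union bound and averaging before applying Lemma~\ref{lem:sep-sample}, whereas you apply its contrapositive conditionally on each good $X$ and integrate, which yields the same contradiction.
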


\begin{proof}
	Suppose, on the contrary, that $\abs{B} <
	\abs{G}/(12m^d)$. Consider the Cayley graph on $G$ generated
	by $B \setminus \{0\}$, i.e., there is edge between $x,y \in G$
	whenenver $x-y \in B$. Applying Lemma~\ref{lem:ind-set} with $k
	= 12m^d$ to this graph, we find that with probability at least
	$1-e^{-m^d}$, a random $12m^d$-element subset $X \subseteq G$
	contains an independent set $I\subseteq X$ with $\abs{I} \ge 3m^d$
	with respect to this graph, i.e., $\abs{(A + x) \Delta (A+y)}
	> \delta \abs{G}$ for all distinct $x,y \in I$. It follows,
	by union bound and averaging, that we can fix such a set $X$ so
	that $\vcdim\{(A+x)\cap Y : x \in X\} \le d$ with probability
	at least $3m^{2d}(1-\delta)^m$ for the random $m$-element set
	$Y \subseteq G$.

	Note that $\{A + x : x \in I\}$ is a $\delta$-separated set system
	with ground set $G$. Furthermore, $\vcdim\{(A+x)\cap Y : x \in I\}
	\le \vcdim\{(A+x)\cap Y : x \in X\} \le d$ with probability at
	least $3m^{2d}(1-\delta)^m$. So by Lemma~\ref{lem:sep-sample},
	we have $\abs{I} \le 2m^d$, which contradicts the bound $\abs{I}
	\ge 3m^d$ above.
\end{proof}

\begin{proof}[Proof of Proposition~\ref{prop:robust-reg}]
	Let $0 < \delta <1/2$. Consider $B = \{x : \abs{A \Delta (A+x)} \le \delta \abs{G}\}$. Choose $m = C \delta^{-1}\log(1/\delta)$ where $C$ is a sufficiently large constant. Then $e^{-m^d} < 1/20$ and $3m^{2d}(1-\delta)^m < 2m^{2d} e^{-\delta m} < 1/20$.
	
	If $\abs{B} < \abs{G}/(12m^d)$, then by Lemma~\ref{lem:robust-vc-to-ball}, if $X$ and $Y$ are random $2m^d$-element subsets of $G$, then $\vcdim\{(A+x) \cap Y : x \in X\} > d$ with probability at least $0.9$.
		
	On the other hand, if $\abs{B} \ge \abs{G}/(12m^d)$, then by Lemma~\ref{lem:reg-key} there exists a subgroup $H$ of $G$ with $\abs{H} \ge \delta^{o(1)} |B| \ge \delta^{d + o(1)} |H|$ such that $\abs{A \Delta S} \le \delta^{1-o(1)} \abs{G}$ for some union $S$ of $H$-cosets. By choosing $\delta = \epsilon^{1+o(1)}$ so that $\abs{A \Delta S} \le \epsilon \abs{G}$, we obtain the desired result.
\end{proof}

\section{Removal lemma} \label{sec:removal}

In this section, we prove the removal lemma, Theorem~\ref{thm:removal},
for bi-induced patterns.

The result is analogous to the induced removal lemma \cite{AFKS} which 
can be proved using a strong version of the graph regularity lemma. The
usual way of proving the strong graph regularity lemma involves iteratively applying the
graph regularity lemma. For our arithmetic setting, as we are concerned with bi-induced patterns, the situation is a
bit easier: we simply apply the regularity lemma, Proposition~\ref{prop:robust-reg}, twice, where the second
time we choose a smaller error parameter compared to the first time.
If option (a) holds
either time, then we can extract a bi-induced copy of $F$ from each sample
with high VC dimension. Otherwise, (b) holds, and we can modify $A$ by
a small amount to $A'$, which must also have a bi-induced copy of $F$
(since $A$ is $\epsilon$-far from bi-induced-$F$-free). The set $A'$ is a
union of $H$-cosets where $H$ is a subgroup of bounded index, and we will
show that a single bi-induced copy of $F$ in $A'$ leads to many copies.

\begin{proof}[Proof of Theorem \ref{thm:removal}]
	Let $V(F) = U \cup V$ be the vertex bipartition of $F$, where
	$\abs{U} \ge \abs{V}$. Let $d =\abs{U} + \lceil \log_2\abs{U}
	\rceil$.

	We may assume that $\abs{G} \ge \epsilon^{-\Omega(\abs{V(F)}^2)}$ or
	else the conclusion is automatic from just a single bi-induced
	copy of $F$ in $A$.

Suppose, for some $k = \epsilon^{-O(\abs{V(F)})}$, with probability
at least 0.9, random $k$-element subsets $X, Y \subseteq G$ satisfy
$\vcdim\{(A + x) \cap Y : x \in X\} > d$, in which case there exist
injective maps $U \to X$ and $V \to Y$ that bi-induce $F$ in $A$
by footnote~\ref{ft:vc-bi-induce}. Then the probability that random
injections $U \to G$ and $V \to G$ bi-induce $F$ is at least $0.9
\binom{k}{|U|}^{-1}\binom{k}{|V|}^{-1} \ge 0.9 k^{-|U|-|V|} \ge
\epsilon^{O(\abs{V(F)}^2)}$, since we can choose the random injection
$U \to G$ by first choosing the random $k$-element subset $X \subset
G$ and then taking a random injection $U \to X$, and similarly with
$V$. With probability $1-O_F(\abs{G}^{-1})$ a random map $V(F) \to G$
is injective on $U$ and $V$, so it bi-induces $F$ with probability at
least $\epsilon^{O(\abs{V(F)}^2)}$.

	We apply Proposition~\ref{prop:robust-reg} with two different
	parameters $\epsilon_1 = \epsilon/10$ and some $\epsilon_2$ to be specified later.
	If option (a) is true in either
	case, then the previous paragraph implies the conclusion of
	the Theorem. Otherwise, we obtain subgroups $H_1$ and $H_2$
	of $G$, such that for each $i \in \{1,2\}$, one has $h_i :=
	\abs{G}/\abs{H_i} \le \epsilon_i^{-d-o(1)}$ and there exists some
	union $S_i$ of $H_i$-cosets satisfying $\abs{A \Delta S_i} \le
	\epsilon_i\abs{G}$. Furthermore, we choose $\epsilon_2$ so that $h_1	\epsilon_2 \abs{U}\abs{V} = 1/8$. In particular, $\epsilon_2 \ge  \epsilon^{d + o(1)}$.

	Let $H = H_1 \cap H_2$. So $|G|/|H| \le h_1h_2 \le
	\epsilon^{-d^2-d-o(1)}$. We say that a coset $x + H$ of $H$ is
	\emph{good} if $\abs{A \Delta (x+H)}/\abs{H}$ is within $\eta
	:= 1/(2|U||V|)$ of $0$ or $1$, and \emph{bad} otherwise. At
	most an $\epsilon_2/\eta$-fraction of $H$-cosets are bad, since
	otherwise bad $H$-cosets would together contribute more than
	$(\epsilon_2/\eta) \eta \abs{G}$ elements to $A \Delta S_2$
	as $S_2$ is also a union of $H$-cosets, but this is impossible
	as $\abs{A \Delta S_2}\le \epsilon_2\abs{G}$.

	Pick an arbitrary subgroup $K$ of $G$ containing exactly one
	element from each coset of $H_1$ (so that $G = H_1 \oplus
	K$ as a direct sum). Let $z \in H_1$ be chosen uniformly at
	random. Then $z + K + H$ is a union of $\abs{K} = h_1$ many
	$H$-cosets. For each $y \in K$, the random $H$-coset $z+ y +
	H$ is uniformly chosen from all $H$-cosets in $y+H_1$. Applying
	the union bound, we see that the probability that $z + K +
	H$ contains a bad $H$-coset is at most $h_1 \epsilon_2/\eta <
	2h_1\epsilon_2 \abs{U}\abs{V} < 1/2$.

	Let $A' \subseteq G$ be the union of $H_1$-cosets $y + H_1$,
	ranging over all $y \in K$ with $\abs{A \cap (z + y +
	H)} \ge \abs{H}/2$. Since $A'$ and $S_1$ are both unions
	of $H_1$-cosets, we can apply linearity of expectation over
	$H_1$-cosets to deduce that $\EE[\abs{A' \Delta S_1}] \le 2\abs{A
	\Delta S_1} \le 2\epsilon_1\abs{G}$, and hence $\EE[\abs{A'
	\Delta A}] \le \EE[\abs{A' \Delta S}] + \abs{A \Delta S} \le
	3\epsilon_1\abs{G}$. Thus, with probability at least $1/2$,
	one has $\abs{A' \Delta A}/\abs{G} \le 6\epsilon_1 < \epsilon$.

	Therefore there is some instance such that $\abs{A' \Delta A} <
	\epsilon \abs{G}$, and $z + K + H$ is a union of good $H$-cosets.

	Since $A$ is $\epsilon$-far from bi-induced-$F$-free, $A'$
	contains a bi-induced-copy of $F$. So there exist $x'_u, y'_v
	\in G$ over $u \in U$ and $v \in V$ such that for all $(u,v)\in
	U \times V$, one has $x'_u + y'_v \in A'$ if and only if $uv\in
	E(F)$. Since $A'$ is a union of $H_1$-cosets, and there is an
	element of $K$ in every $H_1$-coset, we may assume that $x'_u
	\in K$ for each $u \in U$ and $y'_v \in z + K$ for each $v \in V$.

	Consider independent and uniform random elements $x_u \in x'_u +
	H$ for each $u \in U$, and $y_v \in y'_v + H$ for each $v \in
	V$. For each $(u,v) \in U\times V$, the random element $x_u +
	y_v$ is distributed uniformly in the $H$-coset $x'_u + y'_v + H$,
	which is a good $H$-coset since $x'_u + y'_v \in z + K$ as $K$
	is a subgroup. So with probability at least $1-\eta$, one has
	$x_u+y_v \in A$ if and only if $x'_u + y'_v \in A'$, which in
	turn occurs if and only if $uv \in E(F)$. Taking a union bound
	over $(u,v)\in U \times V$, the following holds with probability
	at least $1 - \abs{U}\abs{V} \eta= 1/2$: for every $(u,v)\in
	U\times V$, one has $x'_u + y'_v \in A$ if and only if $uv \in
	E(F)$. Since each $x_u$ and $y_v$ is restricted to a single
	$H$-coset, it follows that a uniform random map $\phi \colon
	V(F) \to G$ bi-induces $F$ with probability at least $\frac12
	(\abs{H}/\abs{G})^{\abs{V(F)}} \ge
\epsilon^{(d^2+d+o(1))\abs{V(F)}}$.
\end{proof}

\section{Concluding remarks}

We conjecture that the result can be extended to general groups, not necessarily abelian. 

\begin{conjecture}
Fix positive integers $r$ and $d$. Let $G$ be a group of exponent at most
$r$, and $A \subseteq G$ a subset with VC dimension at most $d$. Then, for
every $\epsilon > 0$, there is a normal subgroup $H$ of $G$ of index at
most $\epsilon^{-O_{r,d}(1)}$ so that $\abs{A \Delta S} \le \epsilon \abs{G}$
for some union $S$ of $H$-cosets.
\end{conjecture}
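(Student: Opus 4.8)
A plausible route towards this conjecture is to run the three-step skeleton behind Theorem~\ref{thm:reg} with group multiplication in place of addition: (i) produce a large set $B$ of approximate symmetries of $A$; (ii) locate a genuine subgroup $H$ of polynomial index inside a bounded product set of $B$; (iii) approximate $A$ by a union of cosets of $H$. The genuinely new difficulty lies entirely in the transition from a subgroup to a \emph{normal} subgroup.

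Step (i) goes through with almost no change. Assume $\vcdim\{gA : g \in G\} \le d$ (working with left translates; the right-translate version is equivalent after inverting), and set $B = \{x \in G : \abs{A \Delta xA} \le \delta\abs{G}\}$. Applying Lemma~\ref{lem:haussler} to a maximal $\delta$-separated subfamily of $\{gA : g\in G\}$ and using that left multiplication preserves $\abs{\cdot}$, one finds that $G$ is covered by at most $(30/\delta)^d$ left translates of $B$, hence $\abs{B} \ge (\delta/30)^d\abs{G}$; moreover $B = B^{-1}$, and $k$-fold products of $B$ lie in $\{x : \abs{A\Delta xA} \le k\delta\abs{G}\}$ because $\abs{A\Delta xyA} \le \abs{A\Delta xA} + \abs{xA\Delta xyA} = \abs{A\Delta xA} + \abs{A\Delta yA}$. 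The iterated-product trick from Lemma~\ref{lem:reg-key} also survives: among $B, B^2, B^4, \dots$ some $B' = B^{2^i}$ satisfies $\abs{B'B'} \le K\abs{B'}$ with $i = O(\log(1/\delta)/\log K)$, else $B^{2^i}$ would exhaust $G$. For step (ii) one then needs a non-abelian Freiman/Bogolyubov--Ruzsa theorem: a finite subset $B'$ of a group of exponent at most $r$ with $\abs{B'B'} \le K\abs{B'}$ should contain, inside a bounded product set of $B'$ and $(B')^{-1}$, a subgroup $H$ with $\abs{H} \ge c_r(K)\abs{B'}$. The structure theory of approximate groups supplies such an $H$, and with any bound of the shape $c_r(K) = K^{-O_r(1)}$ one may then define $S$ to be the union of the left cosets $Hg$ with $\abs{A\cap Hg} \ge \abs{H}/2$ and bound $\abs{A\Delta S} \le \frac{1}{\abs{H}}\sum_{x\in H}\abs{A\Delta xA}$ exactly as in Lemma~\ref{lem:reg-key}. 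A first technical point to check is whether the available quantitative versions of approximate-group structure theory are strong enough in this generality to give a polynomial-in-$K$ bound.

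The main obstacle, however, is \textbf{making $H$ normal while keeping the index polynomial}. Passing to the normal core $\bigcap_{g\in G} gHg^{-1}$ is hopeless here: its index can be as large as $[G:H]!$, and even exploiting exponent at most $r$ one cannot in general do better than exponential in $[G:H]$ (witness elementary abelian subgroups of symmetric groups), which would turn a polynomial index into an exponential one. What one really wants is to build $H$ conjugation-stable from the start — for instance, for the symmetrized set $B^G = \bigcup_{g\in G} gBg^{-1}$ (or a suitable two-sided variant) to remain large with small doubling, so that the structure theorem outputs a normal subgroup directly. The trouble is that membership in $B$ is governed by $\abs{A\Delta xA}$, which is not visibly preserved under conjugation, so it is precisely here that an idea beyond the abelian argument is required; plausible alternatives are to replace $A$ by a conjugation-invariant proxy of comparable VC dimension, or to extract a normal subgroup from the double-coset structure once $A$ is known to be approximately bi-invariant under $H$, but neither step is routine. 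That the model-theoretic arguments of Conant--Pillay--Terry~\cite{CPT} already produce a normal $H$ of bounded index in the stable case, but with no effective bound, is a good indication that it is the combination of normality with polynomial control that constitutes the real content of the conjecture.
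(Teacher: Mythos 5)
The statement you are addressing is not a theorem of the paper: it is stated there as an open conjecture in the concluding remarks, so there is no proof in the paper to compare yours against. Your writeup is, by its own admission, not a proof either. Steps (i) and (iii) of your outline are indeed routine transcriptions of Lemmas~\ref{lem:invariant} and \ref{lem:reg-key} to the non-abelian setting (the triangle inequality $\abs{A\Delta xyA}\le\abs{A\Delta xA}+\abs{A\Delta yA}$, the symmetry $B=B^{-1}$, the doubling iteration, and the coset-rounding bound all go through), but the two steps that carry all the content are exactly the ones you leave open: (ii) a quantitative Bogolyubov--Ruzsa/Freiman theorem for bounded-exponent non-abelian groups with a bound of the shape $c_r(K)=K^{-O_r(1)}$ (the Breuillard--Green--Tao structure theory you invoke is proved by ultrafilter/model-theoretic methods and gives no effective bounds at all, let alone polynomial ones, which is precisely why the paper only \emph{conjectures} the statement and cites \cite{BGT1,BGT2} as a hoped-for ingredient), and the passage from a subgroup of polynomial index to a \emph{normal} subgroup of polynomial index, where you correctly note that taking the normal core destroys the bound and that making $B$ conjugation-stable is not obviously compatible with its definition via $\abs{A\Delta xA}$. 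Your diagnosis of these obstacles is accurate and consistent with the paper's own discussion (including the non-quantitative normal-subgroup result of Conant--Pillay--Terry \cite{CPT} in the stable case), but identifying the obstacles does not overcome them: as it stands, the proposal reduces the conjecture to two unproved statements, one of which (polynomial non-abelian Bogolyubov--Ruzsa in bounded exponent) is itself a major open problem. So this should be read as a plan of attack, not a proof.
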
 

A special case of the conjecture, though with a somewhat stronger but
non-quantitative conclusion, where one forbids a half-graph of fixed size
(instead of assuming bounded VC dimension), was recently established by
Conant, Pillay, and Terry~\cite{CPT} using model theoretic tools. 

Note that the bounded exponent hypothesis in the conjecture above cannot be dropped. Indeed, if $G = \ZZ/p\ZZ$ with $p$ prime, and $A = \{1, 2, \dots, \floor p/2 \rfloor\}$, then $\vcdim A \le 3$, while $G$ has no non-trivial subgroups, so the conclusion of the conjecture is false. Nonetheless, there may be regularity lemmas using other structures in addition to subgroups. An example of such a result is discussed later in this section.

We also conjecture that the removal lemma should generalize to arbitrary groups as well, although it seems to be open even for the general abelian groups.

\begin{conjecture}
	Fix a bipartite graph $F$. Let $G$ be a finite group. For every $0 < \epsilon < 1/2$, if $A \subseteq G$ is $\epsilon$-far from bi-induced-$F$-free, then the probability that a uniform random map $\phi \colon V(F) \to G$ bi-induces $F$ is at least $\epsilon^{O_F(1)}$.
\end{conjecture}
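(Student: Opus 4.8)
The natural plan is to mirror the proof of Theorem~\ref{thm:removal} and isolate which ingredients are commutativity-free and which require genuinely new input. The combinatorial core carries over to an arbitrary finite group $G$ with no essential change, once one fixes the convention that VC dimension refers to the right translates $\{Ax : x \in G\}$: Haussler's packing lemma (Lemma~\ref{lem:haussler}), the greedy independent-set sampling lemma (Lemma~\ref{lem:ind-set}), Sauer--Perles--Shelah (Theorem~\ref{thm:vc}), and the separated-sampling lemmas (Lemmas~\ref{lem:sep-sample} and~\ref{lem:robust-vc-to-ball}) refer only to the ground set. Running these as in Proposition~\ref{prop:robust-reg}, under the negation of the sampling alternative (a) one obtains a symmetric set $B = \{x : |A \Delta Ax| \le \delta|G|\}$ with $1 \in B = B^{-1}$ and $|B| \ge \delta^{O_d(1)}|G|$; since right translation preserves the counting measure, $|A \Delta Axy| \le |A \Delta Ay| + |Ay \Delta Axy| = |A \Delta Ay| + |A \Delta Ax|$, so $|A \Delta Ax| \le k\delta|G|$ for every $x$ in the $k$-fold product set $B^k$. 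A pigeonhole over $B, B^2, B^4, \dots$ then yields $B' = B^{2^i}$ with bounded doubling $|B'B'| \le K|B'|$ and $i$ small enough that $B'$ still occupies a $\delta^{-o(1)}$-fraction of $G$. Up to here nothing uses commutativity or bounded exponent.

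Everything now hinges on a non-abelian substitute for the appeal to Bogolyubov--Ruzsa (Theorem~\ref{thm:bog-exp}) inside Lemma~\ref{lem:reg-key}. What one wants is a quantitative structure theorem of the form: \emph{a dense subset $B'$ of a finite group with $|B'B'| \le K|B'|$ has a bounded power $(B')^{O(1)}$ containing a normal subgroup $H$ with $[G:H] = \delta^{-O(1)}$}. Granting this, $|A \Delta Ax| \le \delta^{1-o(1)}|G|$ for all $x \in H$, and the ``majority-coset'' construction in the proof of Lemma~\ref{lem:reg-key} --- which uses only that translation is measure-preserving --- produces a union $S$ of $H$-cosets with $|A \Delta S| \le \delta^{1-o(1)}|G|$, normality reconciling the left/right conventions. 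The two-applications-of-regularity argument of Theorem~\ref{thm:removal} is then nearly formal: take normal $H_1, H_2$, set $H = H_1 \cap H_2$ (still normal), pass to the finite quotient $G/H$, choose there a random union of $H$-cosets meeting each $H_1$-coset (replacing the complement subgroup $K$ of the abelian proof, which need not exist when $G$ does not split as $H_1 \ltimes K$), discard the boundedly-few bad cosets, replace $A$ by an $\epsilon$-close union $A'$ of $H_1$-cosets, extract a single bi-induced copy of $F$ in $A'$ from $\epsilon$-farness (via footnote~\ref{ft:vc-bi-induce} on the VC-to-pattern step), and blow it up inside good $H$-cosets --- all counting over cosets.

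So, modulo the routine-but-fiddly quotient bookkeeping, the conjecture reduces to the displayed structure-theoretic statement, and that is where the real difficulty lies. For abelian groups of unbounded exponent it is already false as stated --- the $\ZZ/p\ZZ$ example in the concluding remarks has no nontrivial subgroup --- and the only available repair replaces $H$ by a dense coset Bohr set (or coset progression); but Bohr sets do not tile $G$ by translates, so the final coset blow-up collapses, and mending it is presumably precisely why the authors flag the general abelian case as open. For general non-abelian groups the relevant structure theory does exist (Breuillard--Green--Tao approximate groups, Hrushovski, Sanders-type results), but the known bounds on the index or rank it produces --- and on passing to a normal subgroup --- are nowhere near polynomial in $K$, so even with a workable ``non-abelian Bohr set'' notion one would recover only a removal lemma with a non-explicit (tower-type or worse) $\delta(\epsilon)$, not the promised $\epsilon^{O_F(1)}$. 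The main obstacle is thus genuinely new additive combinatorics: a Freiman--Bogolyubov--Ruzsa theorem for dense subsets of arbitrary finite groups with polynomial bounds, in a form flexible enough that the coset blow-up survives. A natural intermediate target is the conjecture for all finite groups of \emph{bounded exponent}, where honest normal subgroups should suffice and only a bounded-exponent non-abelian Bogolyubov--Ruzsa lemma --- plausibly within reach of current methods --- is needed.
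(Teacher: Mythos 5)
You were asked about one of the paper's \emph{conjectures}, not a proved result: the authors state explicitly that this removal lemma for general finite groups ``seems to be open even for the general abelian groups,'' so there is no proof in the paper to compare against. Your proposal, as you yourself acknowledge, is a reduction rather than a proof, and the step it reduces to is exactly the open problem. Concretely, the gap is the structure-theoretic statement you isolate: from a set $B$ with $|B|\ge \delta^{O_d(1)}|G|$ and bounded doubling for some power $B^{2^i}$, you need a \emph{normal} subgroup $H$ of index $\delta^{-O(1)}$ inside a bounded product set $(B)^{O(1)}$, with polynomial dependence throughout. No such quantitative non-abelian Bogolyubov--Ruzsa/Freiman theorem is known (the known approximate-group results of Breuillard--Green--Tao, Hrushovski, and Sanders give no polynomial, and in some cases no explicit, bounds), and the paper itself only gestures at this theory as ``likely useful.'' Worse, the conjecture as stated includes groups of unbounded exponent such as $\ZZ/p\ZZ$, where \emph{no} bounded-index proper subgroup exists at all; there the subgroup-based pipeline fails at the first structural step, not merely at the final blow-up, and the paper's own coset-progression substitute (the Proposition in the concluding remarks) comes with the authors' admission that they do not know how to turn it into a removal lemma, precisely because progressions and Bohr sets do not partition $G$ into translates the way $H$-cosets do.

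That said, your analysis of which ingredients are commutativity-free is essentially right and matches the architecture of the paper's proof of Theorem~\ref{thm:removal}: Lemmas~\ref{lem:haussler}, \ref{lem:ind-set}, \ref{lem:sep-sample}, \ref{lem:robust-vc-to-ball} and the majority-coset construction in Lemma~\ref{lem:reg-key} use only the group's counting measure, and the symmetry and subadditivity of $x\mapsto |A\Delta Ax|$ survive without commutativity; your observation that the complement subgroup $K$ in the proof of Theorem~\ref{thm:removal} must be replaced by a transversal/quotient argument when $G$ does not split is also a legitimate point to flag. But these are the routine parts. As a verdict: the proposal correctly identifies, and honestly does not bridge, the missing ingredient --- a polynomially quantitative Bogolyubov--Ruzsa-type theorem producing normal subgroups (or a workable substitute structure together with a new blow-up argument in the unbounded-exponent case) --- so it does not constitute a proof of the conjecture, which remains open.
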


It seems likely that the theory developed by Breuillard, Green, and Tao
\cite{BGT1,BGT2} on the structure of approximate groups should be useful
in the case of nonabelian groups. We hope to study these problems in the future.

\medskip

In classical results in additive combinatorics, such as Freiman's
theorem, when the ambient group does not have many subgroups, generalized
progressions and Bohr sets often play the role of subgroups when the group
does not have many subgroups. For example, in Green and Ruzsa's~\cite{GR}
extension of Freiman's theorem to general abelian groups, the basic
structural objects are \emph{coset progressions}, which are sets of the
form $P = Q + H$, where $H$ is a subgroup, and $Q$ is some generalized
arithmetic progression $\{x_0 + i_1 x_1 + \cdots + i_d x_d : 0 \le i_j
< \ell_j \text{ for each } j\}$, and the sum $Q+H$ is a direct sum in
the sense that every element in $Q+H$ can be written as $q+h$ with
$q\in Q$ and $h \in H$ in a unique way. We say that the progression
is \emph{proper} if all the terms $x_0 + i_1 x_1 + \cdots + i_d x_d$ in $Q$
are distinct. We call $d$ the \emph{dimension} of the progression.

The Bogolyubov--Ruzsa lemma, Theorem~\ref{thm:bog-exp}, holds for general abelian groups (see~\cite[Section 5]{GR}; also see \cite{San12}).

\begin{theorem}[Bogolyubov--Ruzsa lemma for general abelian groups] \label{thm:bog}
	Let $G$ be an abelian group, and $A \subseteq G$ a finite set such that $|A + A| \le K|A|$. Then $2A - 2A$ contains a proper coset progression $P$ of dimension at most $d(K)$ and size at least $c(K)|A|$, for some constants $c(K), d(K) > 0$.
\end{theorem}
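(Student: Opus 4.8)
The plan is to follow the route of Green and Ruzsa~\cite{GR}, with the quantitative refinements of Sanders~\cite{San12}: reduce to a finite model, run Bogolyubov's Fourier‑analytic argument to locate a Bohr set inside $2A-2A$, convert that Bohr set into a proper coset progression by geometry of numbers, and transport the progression back to $G$. The genuinely delicate step is the first one; the rest is, by now, fairly routine.

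\emph{Step 1 (finite model).} Since $A$ is finite we may replace $G$ by $\langle A\rangle$, a finitely generated abelian group, hence of the form $\ZZ^{s}\oplus T$ with $T$ finite. The Pl\"unnecke--Ruzsa inequality gives $\abs{mA-nA}\le K^{m+n}\abs{A}$, so every iterated sumset we shall need has size $K^{O(1)}\abs{A}$. Using a modelling lemma valid for general abelian groups — simultaneously handling the free summand $\ZZ^{s}$ and the torsion summand $T$, which is the technical core of~\cite[Section~5]{GR} — one produces a Freiman isomorphism of some sufficiently large constant order (say $16$) from $A$ onto a subset $A'$ of a \emph{finite} abelian group $G'$ with $\abs{G'}\le K^{O(1)}\abs{A}$; in particular the density $\alpha:=\abs{A'}/\abs{G'}$ satisfies $\alpha\ge K^{-O(1)}$. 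A Freiman $16$‑isomorphism preserves all additive relations among a bounded number of elements, so it identifies $2A'-2A'$ with $2A-2A$ faithfully enough that it suffices to find the desired progression inside $2A'-2A'$ and transport it back.

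\emph{Step 2 (Bogolyubov in $G'$).} The function $f=1_{A'}\ast 1_{A'}\ast 1_{-A'}\ast 1_{-A'}$ is supported on $2A'-2A'$ and satisfies $\wh f=\abs{\wh{1_{A'}}}^{4}\ge 0$. Put $\Lambda=\set{\gamma:\abs{\wh{1_{A'}}(\gamma)}\ge \tfrac12\sqrt{\alpha}\,\wh{1_{A'}}(0)}$. For $x$ in the Bohr set $B(\Lambda,\tfrac14)=\set{x:\abs{\gamma(x)-1}\le\tfrac14 \text{ for all } \gamma\in\Lambda}$ one has $\operatorname{Re}\gamma(x)\ge \tfrac34$ for each $\gamma\in\Lambda$, while Parseval bounds $\sum_{\gamma\notin\Lambda}\abs{\wh{1_{A'}}(\gamma)}^{4}$ below the $\gamma=0$ term of $f(x)$; hence $\operatorname{Re}f(x)>0$ and $B(\Lambda,\tfrac14)\subseteq 2A'-2A'$. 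By Chang's spectral lemma together with $\alpha\ge K^{-O(1)}$, the rank of this Bohr set is $\abs{\Lambda}\le d(K):=K^{O(1)}$, and the standard lower bound on the size of a Bohr set gives $\abs{B(\Lambda,\tfrac14)}\ge \big(c/d(K)\big)^{d(K)}\abs{G'}\ge c'(K)\abs{A'}$.

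\emph{Step 3 (progression, then pull back).} Realize $B(\Lambda,\tfrac14)$ as the set of lattice points of $\ZZ^{\abs{\Lambda}}$ inside a symmetric convex body built from the characters in $\Lambda$, apply Minkowski's second theorem to obtain a reduced basis, and read off a generalized arithmetic coset progression $P\subseteq B(\Lambda,\tfrac14)$ of dimension at most $\abs{\Lambda}\le d(K)$ and size $\abs{P}\ge \big(c/d(K)\big)^{d(K)}\abs{B(\Lambda,\tfrac14)}\ge c(K)\abs{A'}$; properness is arranged either directly in this construction or by replacing $P$ with a generic dilate and using pigeonhole to delete coincidences. Transporting $P$ back through the inverse Freiman isomorphism — whose order $16$ exceeds the number of summands needed both to certify membership in $2A-2A$ and to detect properness — yields a proper coset progression inside $2A-2A\subseteq G$ of dimension at most $d(K)$ and size at least $c(K)\abs{A}$, as required. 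The crude Chang/Minkowski estimates used here only give $d(K)$ polynomial and $1/c(K)$ exponential in $K^{O(1)}$; the sharper $d(K)=O(\log^{4}2K)$ and $c(K)=e^{-O(\log^{4}2K)}$ of~\cite{San12} come from replacing Chang's theorem by the Croot--Sisask almost‑periodicity method.
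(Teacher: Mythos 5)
The paper never proves Theorem~\ref{thm:bog}: it is quoted as a known result with a pointer to \cite[Section 5]{GR} (see also \cite{San12}), so there is no internal argument to compare against, and your sketch is exactly the route those references take (finite model, Bogolyubov--Chang to place a Bohr set inside $2A-2A$, geometry of numbers to extract a proper coset progression, pull back through a Freiman isomorphism of sufficiently high order). The one imprecision is in Step 1: the Ruzsa/Green--Ruzsa modelling lemma does not furnish a Freiman $16$-isomorphism of \emph{all} of $A$ into a finite group of size $K^{O(1)}\abs{A}$, only of a dense subset $A''\subseteq A$ with $\abs{A''}\ge \abs{A}/s$; this is harmless for your argument, since $2A''-2A''\subseteq 2A-2A$, the doubling of $A''$ is still $K^{O(1)}$, and $\abs{A''}\ge c_K\abs{A}$, but the step should be stated in that form.
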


By modifying the proof of Theorem~\ref{thm:bog-exp} so that we apply
Theorem~\ref{thm:bog} instead of \ref{thm:bog-exp}, we obtain an analog of
the first claim in Theorem~\ref{thm:bog-exp} for general finite abelian
groups. We are not sure if some variant of this result can be used to
prove a removal lemma.

\begin{proposition}
For every $\epsilon > 0$ and $D = D(\epsilon) \to \infty$ as $\epsilon
\to 0$, if $G$ is a finite abelian group, and $A \subseteq G$ has VC
dimension at most $d$, then there exist some proper coset
progression $P$ of dimension at most $D$ and size $|P| \ge \epsilon^{d +
o(1)} |G|$, such that $|(A + x) \Delta A|\le \epsilon |G|$ for all $x
\in P$.  Here $o(1)$ is some quantity that goes to zero as $\epsilon
\to 0$, at a rate depending on $d$ and $D$.
\end{proposition}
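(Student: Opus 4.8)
The plan is to mimic the proof of Theorem~\ref{thm:reg}, replacing each invocation of a ``bounded exponent'' ingredient by its general-abelian-group analogue. First I would run Lemma~\ref{lem:invariant} verbatim: since $\vcdim A \le d$, the set $B = \{x : |A \Delta (A+x)| \le \delta|G|\}$ satisfies $|B| \ge (\delta/30)^d |G|$, so $B$ occupies a $\delta^{d+o(1)}$-fraction of $G$. The key point is that this step does not use the exponent bound at all.

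Next I would follow the opening of the proof of Lemma~\ref{lem:reg-key}: because $B$ is large, the sequence $B, 2B, 4B, \dots$ cannot keep doubling in size, so there is $\ell = 2^i$ with $|2\ell B| \le K|\ell B|$ and $\ell = \delta^{-O(1/\log K)}$, where $K = K(\delta)$ is chosen to tend to infinity slowly enough that all the constants below are $\delta^{o(1)}$. The triangle-inequality computation \eqref{eq:2l-2l} still gives $|A \Delta (A+x)| \le 4\ell\delta|G| = \delta^{1-o(1)}|G|$ for all $x \in 2\ell B - 2\ell B$. Now, instead of applying Theorem~\ref{thm:bog-exp} to get a subgroup inside $2\ell B - 2\ell B$, I would apply Theorem~\ref{thm:bog} (Bogolyubov--Ruzsa for general abelian groups): since $|2\ell B + 2\ell B| \le K' |2\ell B|$ for a suitable $K' = K'(\delta) = \delta^{-o(1)}$ (obtained by a couple more doublings, or directly from $|2\ell B| \le K|\ell B|$ via Plünnecke--Ruzsa), the set $4\ell B - 4\ell B$ contains a proper coset progression $P$ of dimension at most $d(K') =: D'$ and size at least $c(K')|2\ell B| \ge c(K')|B| = \delta^{d+o(1)}|G|$. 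Choosing $K(\delta)$ and $K'(\delta)$ growing slowly (exactly as in the ``$K(\delta) = \exp((\log 1/\delta)^{1/5})$'' remark after Lemma~\ref{lem:reg-key}), and using that $d(K)$, $c(K)$ grow/decay in controlled ways, we can arrange $D' \le D$ for any prescribed $D = D(\epsilon) \to \infty$, and $|P| \ge \delta^{d+o(1)}|G|$. Every $x \in P \subseteq 4\ell B - 4\ell B$ satisfies $|A \Delta (A+x)| \le 8\ell\delta|G| = \delta^{1-o(1)}|G|$, and finally I set $\delta = \epsilon^{1+o(1)}$ so that this is at most $\epsilon|G|$; then $P$ is the desired progression, of dimension $\le D$ and size $\ge \epsilon^{d+o(1)}|G|$.

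The one slightly delicate point is matching up the growth rates: Theorem~\ref{thm:bog} only asserts that $d(K)$ and $1/c(K)$ are \emph{some} functions of $K$, so to guarantee both $D' \le D(\epsilon)$ (with $D \to \infty$ allowed to be arbitrarily slow) and $c(K') = \delta^{o(1)}$, one must choose $K(\delta) \to \infty$ slowly enough relative to the given function $D$; since $D(\epsilon) \to \infty$, any $K(\delta)$ that tends to infinity sufficiently slowly works, and the resulting $\ell$, $c(K')$, etc.\ are all $\delta^{\pm o(1)}$ by the same reasoning as in Lemma~\ref{lem:reg-key}. I expect this bookkeeping — threading a single slowly-growing parameter $K(\delta)$ through $\ell$, $K'$, $d(K')$, and $c(K')$ so that the dimension stays below the prescribed $D$ while all size losses stay $\delta^{o(1)}$ — to be the main (though routine) obstacle; the additive-combinatorial content is entirely supplied by Theorem~\ref{thm:bog}, and no new idea beyond the proof of Lemma~\ref{lem:reg-key} is needed. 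Note that, unlike in Lemma~\ref{lem:reg-key}, we do not attempt to produce an approximating union of cosets $S$, since a coset progression is not a subgroup; only the ``$A$ is $P$-almost-invariant'' conclusion is claimed.
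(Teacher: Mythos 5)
Your proposal is correct and is essentially the paper's intended argument: the paper proves this proposition exactly by rerunning Lemma~\ref{lem:invariant} and the proof of Lemma~\ref{lem:reg-key}, substituting the general Bogolyubov--Ruzsa lemma (Theorem~\ref{thm:bog}) for the bounded-exponent version and keeping only the ``almost-invariance'' half of the conclusion, with the same slow choice of $K(\delta)$ handling the dimension bound $D$. (Your Pl\"unnecke--Ruzsa detour to $4\ell B - 4\ell B$ is unnecessary but harmless, since Theorem~\ref{thm:bog} applies directly to $\ell B$, which already has doubling at most $K$, placing the progression in $2\ell B - 2\ell B$.)
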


We conclude with the following related question that we do not know
how to answer (even for $k=2$).  An affirmative answer would strengthen
Szemer\'edi's theorem.

\begin{question}
	Let $k$ be a positive integer and $\delta > 0$. Let $p$ be a sufficiently large prime, and $A \subseteq \ZZ/p\ZZ$ with $\delta p \le \abs{A} \le (1-\delta)p$. Can we always find a $2k$-term arithmetic progression in $\ZZ/p\ZZ$ where the first $k$ terms lie in $A$ and the last $k$ terms lie outside of $A$?
\end{question}

If $p$ had a small prime factor, then taking $A$ to be a non-trivial
subgroup of $\ZZ/p\ZZ$ gives a counterexample. To see the relevance to the
rest of this paper, observe that such a $2k$-term arithmetic progression
would bi-induce a half-graph on $2k$ vertices. For example, if $x -
(k-1)d, x-(k-2)d, \dots, x \in A$ and $x+d,  \dots, x + kd \notin A$,
then $x_i = x - id$ and $y_j = jd$ have the property that, for $1 \le i,
j\le d$, $x_i + y_j \in A$ if and only if $j \le i$.

%
%


\begin{dajauthors}
\begin{authorinfo}[alon]
  Noga Alon\\
  Schools of Mathematics and Computer Science, Tel Aviv University, Tel Aviv 69978, Israel \\
  and \\
  Department of Mathematics, Princeton University, Princeton, NJ 08544, USA \\
  nogaa\imageat{}tau\imagedot{}ac\imagedot{}il\\
  \url{https://web.math.princeton.edu/~nalon/}
\end{authorinfo}
\begin{authorinfo}[fox]
  Jacob Fox\\
  Department of Mathematics, Stanford University,
Stanford, CA 94305, USA \\
  jacobfox\imageat{}stanford\imagedot{}edu \\
  \url{http://stanford.edu/~jacobfox/}
\end{authorinfo}
\begin{authorinfo}[zhao]
  Yufei Zhao\\
  Department of Mathematics, Massachusetts Institute of Technology, Cambridge, MA 02139, USA \\
  yufeiz\imageat{}mit\imagedot{}edu\\
  \url{http://yufeizhao.com}
\end{authorinfo}
\end{dajauthors}

\end{document}